\documentclass{article}
\usepackage[utf8]{inputenc}


\usepackage{hyperref}
\usepackage[margin=1.3in]{geometry}
\usepackage{dcpic,pictex}
\usepackage{tikz-cd}  

\usepackage{amsfonts}
\usepackage{amssymb}
\usepackage{mathrsfs}
\usepackage{amsmath,amsthm}

\usepackage{enumerate}
\usepackage{enumitem}

\usepackage{color}
\usepackage{indentfirst}
\usepackage[all]{xy}


\theoremstyle{plain} 
\newtheorem{thm}{Theorem}[section]
\newtheorem*{thm*}{Theorem}
\newtheorem{cor}[thm]{Corollary}
\newtheorem*{cor*}{Corollary}
\newtheorem{lem}[thm]{Lemma}
\newtheorem{prop}[thm]{Proposition}
\numberwithin{equation}{section}

\theoremstyle{definition}
\newtheorem*{defn*}{Definition}
\newtheorem*{rem*}{Remark}
\newtheorem*{exa*}{Example}

%


\newcommand{\Z}{\mathbb{Z}}
\newcommand{\Q}{\mathbb{Q}}

\pagestyle{plain}

\title{Degree $4$ cohomological invariants of algebraic tori}

\author{Cyril Demarche 	\and 	Hanqing Long}

\date{\today}

\linespread{1.2}

\begin{document}
	
	\maketitle
	
	\bibliographystyle{alpha}

	\begin{abstract}
		In this paper, we determine the motive of the classifying torsor of an algebraic torus.
		As a result, we give an exact sequence describing the degree 4 cohomological invariants of algebraic tori. Using results by Blinstein and Merkurjev, this provides a formula for the degree 4 unramified cohomology group of an algebraic torus, via a flasque resolution.
	\end{abstract}
	
	\section{Introduction}
	
	Let $G$ be an algebraic group over a field $F$.
	An invariant of $G$ \cite{garibaldi2003cohomological} is a natural transformation from the functor of $G$-torsors over fields to a functor $H$ from category of fields to category of abelian groups. 
	We denote this invariant group by $\mathrm{Inv}(G,H)$.
	We mainly consider the cohomological invariants, i.e. the case that $H$ is a Galois cohomological functor $H_\mathrm{\acute{e}t}^{d}(-,\mathbb{Q/Z}(j))$ for $d\ge 0, j\ge 0$, and denote this group by $\mathrm{Inv}^{d}(G,\mathbb{Q/Z}(j))$.
	
	In \cite{m2013}, Blinstein and Merkurjev provide a way to compute $\mathrm{Inv}^{d}(G,\mathbb{Q/Z}(j))$ through the motivic cohomology of a classifying space $BG$ of $G$.
	For an algebraic torus $T$, they provide an exact sequence describing $\mathrm{Inv}^{3}(T,\mathbb{Q/Z}(2))_{norm}$, which is the subgroup of normalized invariants, and they give a similar exact sequence computing the unramified cohomology group $H^3_{nr}(F(T),\Q/\Z(2))$ via a flasque resolution of $T$. In particular, they provide an isomorphism 
	\[\gamma_0 : \bar{H}^4(BT, \Z(2))_{bal} / \mathrm{CH}^2(BT) \xrightarrow{\sim} \mathrm{Inv}^{3}(T,\mathbb{Q/Z}(2))_{norm} \] (see \cite{m2013}, diagram before Lemma 4.2) and the exact sequence recalled below in Theorem \ref{invariants3}. Applying these results, Wei and the second author give explicit formulas in \cite{long20243} that determine completely the third unramified cohomology group for norm $1$ tori associated with abelian field extensions. Understanding unramified cohomology of tori is a longstanding problem related to rational points and rationality questions, and in \cite{ct1995}, Colliot-Th\'el\`ene asks for a formula computing these groups for an arbitrary torus $T$, in terms of the module $\widehat{T}$ of characters of $T$.
	
	Following the approach in \cite{m2013}, we propose to compute the groups $\mathrm{Inv}^{4}(T,\mathbb{Q/Z}(3))_{norm}$ and $H^4_{nr}(F(T),\Q/\Z(3))$ for all tori $T$. In the statement below, a classifying space of $T$ is defined as $BT := U/T$, where $V$ is a generically free representation of $T$ and $U \subset V$ in an open subset on which the action of $T$ is free and such that the codimension of $V \setminus U$ in $V$ is big enough. All cohomology groups are \'etale or \'etale motivic cohomology groups.
	
	\newpage
	\noindent Let $I$ be the kernel of \'etale cycle map $\mathrm{CH}^3(BT) \to \bar{H}^6_\mathrm{\acute{e}t}(BT,\mathbb{Z}(3))$.
	\begin{thm*}
		Let $T$ be a torus over a perfect field $F$. 
		Then there is a natural commutative diagram:
		\[
		\begin{tikzcd}
			& [-2.5em] \left(\widehat{T} \otimes K^M_2(F_{sep})\right)^\Gamma \arrow[d] &[-2.5em]&[-2em]  \mathrm{Inv}^3(T, \mathbb{Q/Z}(2))_{norm}\otimes F^*\arrow[lldd, "\gamma" above]\arrow[ldd, "\cup" above]&[-1.5em]   \\
			& H^2(F,\widehat{T}\otimes  \mathbb{Q/Z}(2)) \arrow[d]& &0\arrow[d]&\\
			A^2(BT, K_3^M) \arrow[r,hook]&\bar{H}_\mathrm{\acute{e}t}^5(BT, \Z(3))\arrow[d]\arrow[r]&\mathrm{Inv}^4(T, \mathbb{Q/Z}(3))_{norm}\arrow[r]&I \arrow[r]\arrow[d]& 0\\
			&\left(S^2(\widehat{T})\otimes F_{sep}^*\right)^\Gamma\arrow[dr]& &\mathrm{CH}^3(BT)_{tors} \arrow[d]&  \\
			&&H^3(F,\widehat{T}\otimes  \mathbb{Q/Z}(2))\arrow[r]&\ker\left(\bar{H}_\mathrm{\acute{e}t}^6(BT,\Z(3)) \to S^3(\widehat{T})^\Gamma\right) \arrow[r] &H^1(F,S^2(\widehat{T})\otimes F^*_{sep})
		\end{tikzcd}
		\]
		where the row, the column and the column that turns to a line are exact, where $\gamma$ is induced by the map $\gamma_0$ above and cup-product.
		This conclusion holds for $\mathrm{char}\ F=0$. 
		If $\mathrm{char}\ F=p>0$, then the conclusion still holds after changing $\mathbb{Q/Z}(2)$ in the Galois cohomology groups of $F$ to $\mathbb{Q/Z}(2)[1/p]$.
	\end{thm*}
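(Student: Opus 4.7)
The plan is to extend the Blinstein--Merkurjev method \cite{m2013} from degree $3$ to degree $4$. The crucial new input is the description of the motive of the classifying torsor $U\to BT$ established earlier in the paper: over $F_{sep}$ the torus $T$ splits and Totaro's formula gives a $\Gamma$-equivariant decomposition
\[ H^q_{\mathrm{\acute{e}t}}(BT_{F_{sep}}, \Z(n)) \;\cong\; \bigoplus_{k\ge 0} S^k(\widehat{T})\otimes H^{q-2k}_{\mathrm{\acute{e}t}}(F_{sep}, \Z(n-k)), \]
with $\Gamma=\mathrm{Gal}(F_{sep}/F)$ acting on $S^k(\widehat{T})$ via its action on $\widehat{T}$. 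The Hochschild--Serre spectral sequence
\[ E_2^{p,q} = H^p(\Gamma, H^q_{\mathrm{\acute{e}t}}(BT_{F_{sep}},\Z(3))) \;\Longrightarrow\; H^{p+q}_{\mathrm{\acute{e}t}}(BT,\Z(3)) \]
then has $E_2$-terms computable in terms of $H^p(F, S^k(\widehat{T})\otimes \Z(3-k))$ and, via the coefficient sequence $0\to\Z(j)\to\Q(j)\to\Q/\Z(j)\to 0$, in terms of $H^p(F, S^k(\widehat{T})\otimes \Q/\Z(3-k))$.

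Reading the filtration in total degrees $5$ and $6$ gives the central column and the bent right column of the diagram. Concretely: the $k=2$ summand in degree $5$ produces the graded piece $(S^2(\widehat{T})\otimes F_{sep}^*)^\Gamma$; the $k=1$ summand with Galois level $H^2$ gives the term $H^2(F, \widehat{T}\otimes \Q/\Z(2))$ after applying the Bockstein $\widehat{T}\otimes K^M_2(F_{sep})\to \widehat{T}\otimes \Q/\Z(2)[1]$ coming from $0\to\Z(2)\to\Q(2)\to\Q/\Z(2)\to 0$; and analogously in degree $6$ one recovers $S^3(\widehat{T})^\Gamma$ and $H^1(F, S^2(\widehat{T})\otimes F_{sep}^*)$. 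The subgroup $A^2(BT,K_3^M)\hookrightarrow \bar H^5_{\mathrm{\acute{e}t}}(BT,\Z(3))$ is the Rost/Gersten $K^M_3$-cohomology, embedded via the Bloch--Ogus spectral sequence; it encodes the ``geometric'' piece not seen by the Galois filtration above.

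The horizontal row then follows from the long exact sequence attached to $0\to \Z(3)\to \Q(3)\to \Q/\Z(3)\to 0$ on $BT$, exactly as in the degree-$3$ case of \cite{m2013}. The normalized invariants $\mathrm{Inv}^4(T,\Q/\Z(3))_{norm}$ are identified, by a direct analogue of the $\gamma_0$ isomorphism, with a subquotient of $\bar H^5_{\mathrm{\acute{e}t}}(BT,\Z(3))$ squeezed between $A^2(BT, K_3^M)$ on the left and the cycle-class kernel $I\subset \mathrm{CH}^3(BT)$ on the right. The map $\cup$ out of $\mathrm{Inv}^3(T,\Q/\Z(2))_{norm}\otimes F^*$ is the usual cup product of invariants with $F^*$, and $\gamma$ is its lift to $\bar H^5_{\mathrm{\acute{e}t}}(BT,\Z(3))$ obtained by cupping $\gamma_0(-)$ with the tautological class in $H^1(F,\Z(1))=F^*$; commutativity of the relevant squares is naturality of cup products in motivic/\'etale cohomology.

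The main obstacle is the precise control of the Hochschild--Serre spectral sequence: one must show vanishing of the interfering $E_2^{p,q}$ terms on the relevant anti-diagonals, so that only the four stated pieces survive in total degrees $5$ and $6$, and analyze the differentials $d_2$, $d_3$ that might mix the $S^k(\widehat{T})$ contributions with $S^{k-1}(\widehat{T})\otimes\widehat{T}$ contributions. The ind-scheme nature of $BT$ requires passing to the limit of the approximations $U/T$ compatibly with the motivic decomposition. The characteristic-$p$ caveat reflects that motivic cohomology with mod-$p$ coefficients in characteristic $p$ involves logarithmic de Rham--Witt sheaves outside our framework; inverting $p$ in the $\Q/\Z$-coefficients sidesteps this. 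Once these points are settled, exactness of the three indicated sequences and commutativity of the whole diagram follow by assembling the spectral-sequence data with the cycle-class and Bockstein maps.
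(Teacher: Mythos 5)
Your overall architecture coincides with the paper's (a motivic decomposition of $BT_{sep}$, Hochschild--Serre, a coniveau-type sequence for the middle row, naturality of cup products for the triangle), but the two steps you treat as available inputs are precisely the main technical content of the proof, and as written they are genuine gaps.

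First, the $\Gamma$-equivariant decomposition $H^q_{\mathrm{\acute{e}t}}(BT_{sep},\Z(n))\cong\bigoplus_k S^k(\widehat{T})\otimes H^{q-2k}_{\mathrm{\acute{e}t}}(F_{sep},\Z(n-k))$ does not follow from Totaro's formula, which only computes $\mathrm{CH}^*(BT_{sep})\cong S^*(\widehat{T})$. To promote this to all of motivic cohomology, for the specific model $X=U/T$ carrying the Galois action, one must prove that $M(X_{sep})$ is a pure Tate motive. The paper does this by viewing $X_{sep}\to Y_{sep}$ as a $Q_{sep}$-torsor over a cellular base, computing the fundamental invariants $c^q(X_{sep})$ of the slice filtration as a Koszul-like complex (Proposition \ref{cqBT}), and showing that complex is quasi-isomorphic to $S^q(\widehat{T})[0]$ (Lemma \ref{koszulcomplex}). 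This is also exactly what disposes of the interfering Hochschild--Serre terms and differentials that you flag as the ``main obstacle'' but do not resolve; Corollary \ref{keycorollary} then yields the left column of the diagram.

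Second, the middle row does not come from the Bockstein triangle $0\to\Z(3)\to\Q(3)\to\Q/\Z(3)\to 0$: it comes from the coniveau (Bloch--Ogus) spectral sequence, which gives
\[
0\to A^2(X,K^M_3)\to\bar{H}^5_{\mathrm{\acute{e}t}}(X,\Z(3))\to\bar{H}^0_{Zar}(X,\mathcal{H}^4(\Q/\Z(3)))\to\mathrm{CH}^3(X)\to\bar{H}^6_{\mathrm{\acute{e}t}}(X,\Z(3)) \, .
\]
To replace the third term by $\mathrm{Inv}^4(T,\Q/\Z(3))_{norm}$ one needs Lemma \ref{invariants} (Blinstein--Merkurjev), which identifies the normalized invariants with the \emph{balanced} subgroup of $\bar{H}^0_{Zar}(X,\mathcal{H}^4(\Q/\Z(3)))$, and one must then show that the coniveau sequence remains exact after restricting to balanced elements. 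The paper achieves this by running the sequence over the cosimplicial scheme $U^\bullet/T$ and checking that the outer cosimplicial groups are constant (homotopy invariance of $A^i(-,K^M_j)$, and the fact that $\bar{H}^5_{\mathrm{\acute{e}t}}(U^i/T,\Z(3))$ depends only on $T$, which again relies on the computation above). Your proposal does not address the balanced condition at all, and ``a direct analogue of the $\gamma_0$ isomorphism'' is not a substitute for this argument; likewise the identification of the cokernel with $I$ and the exactness of the column through $\mathrm{CH}^3(BT)_{tors}$ require the restriction--corestriction comparison with $\mathrm{CH}^3(BT_{sep})^\Gamma\cong S^3(\widehat{T})^\Gamma$, which you do not carry out.
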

	Note that the group $I$ is also the kernel of \'etale cycle map $\mathrm{CH}^3(BT) \to \bar{H}^6_\mathrm{\acute{e}t}(BT,\mathbb{Z}(3))$.
	
	In addition, let $1\longrightarrow T\longrightarrow P\longrightarrow S\longrightarrow 1$ be a flasque resolution of an arbitrary $F$-torus $S$.
	By results of Blinstein and Merkurjev \cite{m2013}, the fourth unramified cohomology group $\bar{H}^4_{nr}(F(S), \mathbb{Q/Z}(3))$ is isomorphic to $\mathrm{Inv}^4(T, \mathbb{Q/Z}(3))_{norm}$, therefore the previous theorem also gives an exact sequence computing this unramified cohomology group of $S$.

	The key point of the proof of the main Theorem is to compute the \'etale motivic cohomology of a classifying $T$-torsor $BT$ as above, using the so-called slice filtration spectral sequence introduced in \cite{huber2006slice}.

	\bigskip

	\section{Preliminaries}\label{sec:pre}
	
	In this section, we introduce notations and lemmas which will be helpful in the next sections.

	\subsection{Algebraic tori}
	
	Let $F$ be a field, $F_{sep}$ be a separable closure of $F$ and $\Gamma := \mathrm{Gal}(F_{sep}/F)$.
	Recall that an algebraic torus of dimension $n$ over $F$ is an algebraic $F$-group $T$ such that $T_{sep} := T \times_F F_{sep}$ is isomorphic to $\mathbb{G}_m^{n}$ as a $F_{sep}$-group. 
	For an algebraic $F$-torus $T$, we write $\widehat{T}$ for the $\Gamma$-module $\mathrm{Hom}(T,\mathbb{G}_m)$ and call it the character lattice of $T$. 
	The contravariant functor $T \mapsto \widehat{T}$ defines an anti-equivalence between the
	category of algebraic tori over $F$ and the category of $\Gamma$-lattices. 

	Let $K/F$ be a finite Galois extension with $G=\mathrm{Gal}(K/F)$. 
	We embed $K$ to $F_{sep}$, so that the absolute Galois group $\Gamma_K := \mathrm{Gal}(F_{sep}/K)$ is a subgroup of $\Gamma$.

	A torus $T$ is called flasque (resp. coflasque) if $H^1(L,\widehat{T}_{sep}^\circ)=0$ (resp. $H^1(L,\widehat{T}_{sep})=0$) for all finite field extensions $L/F$. 
	A flasque resolution of a torus $T$ is an exact sequence of tori 
	$$
	1\longrightarrow Q\longrightarrow P\longrightarrow T\longrightarrow 1
	$$
		If $T$ is a norm one torus $R^{(1)}_{L/F}(\mathbb{G}_{m,L})$, the degree $3$ normalized cohomological invariants group of $T$ is isomorphic to $\mathrm{Br(L/F)}$ (see for instance \cite{m2013}, Example 4.14).

	\subsection{Cohomological invariants}
	Given a linear algebraic group $G$ over a field $F$ and a functor $H$ from the category of fields over $F$ to the category of abelian groups, an invariant of $G$ with values in $H$ (or an $H$-invariant) is a morphism of functor:
	$$
	i: H^1(-,G)\to H
	$$
	where $H^1(-,G)$ is the Galois cohomology functor from the category of fields over $F$ to the category of sets.
	We denote the group of $H$-invariants of $G$ by $\mathrm{Inv}(G,H)$.
	An $H$-invariant is called normalized if it takes the trivial torsor to $0$.
	We denote the subgroup of normalized $H$-invariants of $G$ by $\mathrm{Inv}(G,H)_{norm}$.
	There is a natural isomorphism: 
	$$\mathrm{Inv}(G,H)\cong \mathrm{Inv}(G,H)_{norm}\oplus H(F)$$
	where each element $h$ in $H(F)$ defines an $H$-invariant $i_h$ which takes all $G$-torsor over $K$ to the natural image of $h$ in $H(K)$ for every field extension $K/F$. Such an invariant is called a constant invariant.
	We consider here the cohomological invariants, i.e. the case where $H$ is a Galois cohomological functor $H_\mathrm{\acute{e}t}^{d}(-,\mathbb{Q/Z}(j))$ for $d\ge 0, j\ge 0$, and we denote the corresponding group by $\mathrm{Inv}^{d}(G,\mathbb{Q/Z}(j))$.
	
	A $G$-torsor $E\to X$ is said to be classifying if for every field extension $K/F$ with $K$ infinite, every $G$-torsor over $K$  is isomorphic to the fiber of $E \to X$ at a $K$-point of $X$. Classifying torsors exist for all linear $F$-groups by \cite[2b]{m2013}, \cite[5.3]{garibaldi2003cohomological}.
	In \cite{m2013}, Blinstein and Merkurjev give an approach to compute cohomological invariants of smooth linear algebraic groups and an exact estimate of $\mathrm{Inv}^{3}(T,\mathbb{Q/Z}(2))_{norm}$, when $T$ is a $F$-torus. We recall those results here:
	
	\begin{lem}\cite[Theorem 3.4]{m2013}\label{invariants}
		Let $G$ be a smooth linear algebraic group over a field $F$. We assume
		that $G$ is connected if $F$ is a finite field.
		Let $E \to X$ be a classifying $G$-torsor with $E$ a $G$-rational variety such that $E(F)\neq \emptyset$.
		Then the following homomorphism
		$$\varphi : 
		\bar{H}^0_{\mathrm{Zar}}(X,\mathcal{H}^n(\mathbb{Q/Z}(j))_{\mathrm{bal}}\to \mathrm{Inv}^n(G,\mathbb{Q/Z}(j))_{norm}$$
		is an isomorphism.
	\end{lem}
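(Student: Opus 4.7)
The plan is to construct $\varphi$ and its inverse by exploiting the classifying property and evaluation at the generic point of $X$. After identifying $\bar{H}^0_{\mathrm{Zar}}(X,\mathcal{H}^n(\mathbb{Q/Z}(j)))$ with the subgroup of $H^n(F(X),\mathbb{Q/Z}(j))$ unramified at every codimension-one point of $X$, modulo constants coming from $F$, an element $\alpha \in \bar{H}^0_{\mathrm{Zar}}(X,\mathcal{H}^n(\mathbb{Q/Z}(j)))_{\mathrm{bal}}$ is by definition one whose pullbacks along the two projections $E \times_X E \rightrightarrows E$ agree. Given a $G$-torsor $P \to \mathrm{Spec}(K)$, the classifying property produces a $K$-point $x \colon \mathrm{Spec}(K) \to X$ with $P \cong x^* E$, and I set $\varphi(\alpha)(P) := x^*\alpha \in H^n(K,\mathbb{Q/Z}(j))$. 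The balanced hypothesis is precisely what forces this to be independent of the lift $x$: any two lifts correspond to a $K$-point of $E\times_X E$, on which the two pullbacks of $\alpha$ coincide by assumption. The condition $E(F)\neq\emptyset$ realizes the trivial torsor as the fiber over an $F$-point, ensuring $\varphi(\alpha)$ is normalized since $\alpha$ was reduced modulo the image from $F$.

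For the inverse $\psi$, I send a normalized invariant $i$ to its value $\psi(i):=i(E_\eta) \in H^n(F(X),\mathbb{Q/Z}(j))$ on the generic fiber $E_\eta \to \mathrm{Spec}(F(X))$, which is tautologically a $G$-torsor. Two compatibilities must be checked: unramifiedness and balancedness. For the latter, the pullback of $E \to X$ along $E \to X$ is canonically $E\times_X E \to E$ regardless of which projection one uses, so $\mathrm{pr}_1^* i(E_\eta) = \mathrm{pr}_2^* i(E_\eta)$ by naturality of $i$. For unramifiedness at $v \in X^{(1)}$, localize: the restriction of $E$ to $\mathrm{Spec}(\mathcal{O}_{X,v}^h)$ is a $G$-torsor over a henselian regular local ring, and using smoothness of $G$ one shows it is pulled back from a $G$-torsor over $\kappa(v)$; by naturality, $i(E_\eta)$ then lies in the image of $H^n(\mathcal{O}_{X,v}^h,\mathbb{Q/Z}(j))$ in $H^n(F(X),\mathbb{Q/Z}(j))$, so its residue at $v$ vanishes.

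To conclude that $\varphi$ and $\psi$ are mutually inverse, observe that $\psi\circ\varphi(\alpha)=\alpha$ because an unramified class on a smooth variety is determined by its restriction to the generic point and $\varphi(\alpha)$ evaluated at $E_\eta$ recovers $\alpha$ via the identity lift $\eta\to X$; conversely $\varphi\circ\psi(i)(P) = x^* i(E_\eta) = i(x^*E_\eta) = i(P)$ again by naturality. The chief obstacle is the unramifiedness step, which genuinely uses that $E$ is a $G$-rational variety: $G$-rationality ensures that $E$ is birational to $U\times G$ equivariantly over an open $U\subseteq X$, providing enough rational sections of $E$ along codimension-one strata to lift the generic torsor to the henselization and so to make the residue vanish by a specialization argument. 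Without $G$-rationality, the residue could a priori carry a nontrivial obstruction and the evaluation map would not land in the unramified subgroup. The assumption that $G$ be connected when $F$ is finite is secondary: by Lang's theorem all $G$-torsors over finite extensions of a finite field are then trivial, so nothing is lost by restricting the classifying property to infinite $K/F$ in the definition of invariants; in the non-connected case the finitely many $F$-forms are not detected by the generic fiber alone and the argument breaks down.
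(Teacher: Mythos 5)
First, note that the paper offers no proof of this lemma: it is imported verbatim from \cite{m2013} (Theorem 3.4), so your sketch can only be measured against the argument in that reference. Your overall architecture does match it — evaluate a balanced unramified class at $K$-points of $X$ to define an invariant, and evaluate an invariant at the generic fibre $E_\eta$ to go back — but two steps are genuinely defective. The first is that you have misread the balanced condition. As defined just after the lemma (and in \cite{m2013}), $h$ is balanced when $p_1^*(h)=p_2^*(h)$ for the two projections $p_i : E^2/G \to X$, where $E^2 = E\times_F E$ carries the diagonal $G$-action; it is not a condition on $E\times_X E \rightrightarrows E$. For a class living on $X$, your two composites $E\times_X E \to E \to X$ are literally the same morphism, so the condition you verify when showing $\psi(i)$ is balanced is vacuous and proves nothing; likewise, in the well-definedness of $\varphi$, two $K$-points $x_1,x_2$ of $X$ with $x_1^*E\cong x_2^*E$ produce a $K$-point of $E^2/G$ (a torsor equipped with two $G$-maps to $E$), and it is the identity $p_1^*\alpha=p_2^*\alpha$ on $E^2/G$ that forces $x_1^*\alpha=x_2^*\alpha$. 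With the correct object both verifications go through, but as written they do not.

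The second and more serious gap is your repeated appeal to ``naturality of $i$'' across specializations. An invariant is only a natural transformation on the category of \emph{field} extensions of $F$; it is not defined on torsors over $\mathcal{O}^h_{X,v}$. Hence (a) knowing that $E$ extends to a torsor over the henselization at $v\in X^{(1)}$ does not, by naturality alone, place $i(E_\eta)$ in the image of $H^n(\mathcal{O}^h_{X,v},\mathbb{Q}/\mathbb{Z}(j))$, and (b) the identity $x^*(i(E_\eta))=i(x^*E)$ used to prove $\varphi\circ\psi=\mathrm{id}$ compares a Rost specialization of an unramified class with the value of the invariant on the fibre, which is not a naturality statement. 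Both points are exactly the content of the theorem: in \cite{m2013} (following Part 1 of Garibaldi--Merkurjev--Serre) they are established by showing that the value of any invariant at a versal torsor is unramified and compatible with specialization, using the $G$-rationality of $E$ and Rost's specialization maps for cycle modules. Your closing remark that $G$-rationality ``provides enough rational sections'' gestures at this but is not an argument; without it, neither the containment of $\psi(i)$ in the unramified balanced subgroup nor the surjectivity of $\varphi$ is proved.
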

	Here $\mathcal{H}^n(\mathbb{Q/Z}(j))$ is the Zariski sheaf on $X$ associated to the presheaf $U\mapsto H_\mathrm{\acute{e}t}^{n}(U,\mathbb{Q/Z}(j))$ and $H(X)_{\mathrm{bal}}$ is the subgroup of $H(X)$ consists of the elements $h\in H(X)$ such that $p_1^*(h)= p_2^*(h)$ for the projections $p_i: E^2/G\to X$.

	\begin{lem}\cite[Theorem B]{m2013}\label{invariants3}
		Let $T$ be an algebraic torus over a field $F$. Then there is an exact sequence
		\begin{align*}
			0\longrightarrow \mathrm{CH}^2(BT)_{tors}\longrightarrow H^1(F, T^\circ)\longrightarrow &\mathrm{Inv}^{3}(T,\mathbb{Q/Z}(2))_{norm}\\
			\longrightarrow &S^2(\widehat{T})^\Gamma /Dec \longrightarrow H^2(F, T^\circ).
		\end{align*}
	\end{lem}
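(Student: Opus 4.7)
The plan is to use the isomorphism $\gamma_0 \colon \bar{H}^4(BT, \Z(2))_{\mathrm{bal}}/\mathrm{CH}^2(BT) \xrightarrow{\sim} \mathrm{Inv}^3(T, \Q/\Z(2))_{\mathrm{norm}}$ recalled in the introduction, so the task reduces to computing the weight-$2$ motivic cohomology of $BT$ in degree $4$ together with the image of the cycle class map $\mathrm{CH}^2(BT)\to\bar{H}^4(BT,\Z(2))$. As a first step, I would fix a flasque resolution $1 \to T^\circ \to P \to T \to 1$ with $P$ quasi-trivial, which exists for any torus. Since $P$ is quasi-trivial, $BP$ is birational to affine space, which trivializes its motivic cohomology in the relevant bidegrees and allows the terms for $BT$ to be related to the Galois cohomology of $T^\circ$ via the long exact sequence of motivic cohomology associated to this short exact sequence of groups.

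The core computation proceeds via the Hochschild--Serre-type spectral sequence
\[
E_2^{p,q} = H^p\!\left(F, \bar{H}^q(BT_{sep}, \Z(2))\right) \Rightarrow \bar{H}^{p+q}(BT, \Z(2)).
\]
Over $F_{sep}$ the torus splits, and the motivic cohomology of $BT_{sep}$ in weight $2$ is well understood: $\bar{H}^4(BT_{sep}, \Z(2)) \cong S^2(\widehat{T})$ via the cycle class map, and the lower-degree terms reduce, via the Rost--Voevodsky theorem applied to each $\mathbb{G}_m$-factor, to expressions in $\widehat{T}$, $F_{sep}^*$ and $K_2^M(F_{sep})$. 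The top term $E_2^{0,4} = S^2(\widehat{T})^\Gamma$ produces the $S^2(\widehat{T})^\Gamma/Dec$ piece of the target sequence: the subgroup $Dec$ records exactly the classes coming from products $(\widehat{T}^\Gamma)^{\otimes 2} \to S^2(\widehat{T})^\Gamma$, which already lift to $\mathrm{CH}^2(BT)$ and are therefore killed by the $\gamma_0$-quotient. The lower $E_2^{p,q}$-terms with $q\le 2$ involve Galois cohomology of $\widehat{T} \otimes F_{sep}^*$, which via the long exact sequence attached to $1 \to T^\circ \to P \to T \to 1$ and the quasi-triviality of $P$ are rewritten as $H^p(F, T^\circ)$.

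Assembling these inputs, the filtration of $\bar{H}^4(BT, \Z(2))_{\mathrm{bal}}/\mathrm{CH}^2(BT)$ induced by the spectral sequence has associated graded pieces matching the four-term sequence in the claim: the injection $\mathrm{CH}^2(BT)_{tors} \hookrightarrow H^1(F, T^\circ)$ comes from a connecting map in the long exact sequence associated with $1 \to T^\circ \to P \to T \to 1$, carrying torsion cycles on $BT$ to classes in Galois cohomology of $T^\circ$; the invariants group sits between this $H^1$-term and the edge map $S^2(\widehat{T})^\Gamma/Dec$; and the final arrow to $H^2(F, T^\circ)$ is the differential $d_2\colon E_2^{0,4}\to E_2^{2,3}$ that obstructs lifting classes from $S^2(\widehat{T})^\Gamma/Dec$ back to $\bar{H}^4(BT, \Z(2))$.

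The main obstacle is the careful bookkeeping of the balanced condition: pullback along the two projections $E^2/T \rightrightarrows BT$ cuts out a subgroup of $\bar{H}^4(BT, \Z(2))$ whose precise description requires comparing the spectral sequences for $BT$ and $E^2/T$ and demonstrating which $E_\infty$-pieces survive after balancing. A secondary difficulty is identifying the edge maps of the spectral sequence with canonical morphisms $\mathrm{CH}^2(BT)_{tors} \to H^1(F, T^\circ)$ and $S^2(\widehat{T})^\Gamma/Dec \to H^2(F, T^\circ)$, so that they are independent of the chosen flasque resolution and genuinely intrinsic to $T$.
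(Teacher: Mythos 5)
This lemma is recalled verbatim from \cite[Theorem B]{m2013}; the paper gives no proof of its own, so your sketch can only be compared with the argument in that reference, which is also the template for the degree-$4$ analogue proved in Theorem \ref{thminv4}. Your skeleton is essentially the correct one: reduce via the isomorphism $\gamma_0$ to computing $\bar{H}^4(BT,\Z(2))_{bal}/\mathrm{CH}^2(BT)$, run the Hochschild--Serre spectral sequence $E_2^{p,q}=H^p(F,\bar{H}^q(BT_{sep},\Z(2)))\Rightarrow \bar{H}^{p+q}(BT,\Z(2))$, and identify the graded pieces, with $\mathrm{Dec}$ appearing as the image of $\mathrm{CH}^2(BT)$ in $E_2^{0,4}=S^2(\widehat{T})^\Gamma$ and $\mathrm{CH}^2(BT)_{tors}=\ker(\mathrm{CH}^2(BT)\to S^2(\widehat{T}))$ landing in the bottom filtration step.

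There is, however, a genuine confusion in how you produce the groups $H^i(F,T^\circ)$. Here $T^\circ$ is the \emph{dual} torus, with character lattice $\widehat{T}^\circ=\Hom(\widehat{T},\Z)$, so that $T^\circ(F_{sep})\cong\widehat{T}\otimes F_{sep}^*$ and $H^i(F,T^\circ)=H^i(F,\widehat{T}\otimes F_{sep}^*)$. These groups enter the spectral sequence directly through the $q=3$ row, since $\bar{H}^3(BT_{sep},\Z(2))\cong\widehat{T}\otimes F_{sep}^*$ while the rows $q\le 2$ vanish (contrary to what you write); the two maps in question are the inclusion of $E_\infty^{1,3}$ and the differential $d_2\colon E_2^{0,4}\to E_2^{2,3}$, with no auxiliary resolution needed. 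Your plan instead manufactures them from a ``flasque resolution'' $1\to T^\circ\to P\to T\to 1$: no such sequence exists in general, because the kernel of a flasque resolution is a flasque torus unrelated to the dual torus $T^\circ$, and its long exact sequence would compute the Galois cohomology of that flasque kernel rather than of $\widehat{T}\otimes F_{sep}^*$. A smaller inaccuracy: $\mathrm{Dec}$ is the full image of $\mathrm{CH}^2(BT)$ in $S^2(\widehat{T})$, which contains norms from open subgroups and is strictly larger in general than the products coming from $(\widehat{T}^\Gamma)^{\otimes 2}$. Once the detour through the resolution is excised and the degrees corrected, what remains does track the proof of \cite[Theorem B]{m2013}, and the balanced-condition bookkeeping you flag is indeed the residual technical point handled there.
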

	Here $Dec$ denotes the image of $\mathrm{CH}^2(BT)$ in $\mathrm{CH}^2(BT_{sep})\simeq S^2(\widehat{T})$.

	\subsection{Motivic cohomology}
	
	Let $F$ be a perfect field.
	We denote by $DM_{-}^{eff}(F)$ (resp. $DM_{-,\acute{e}t}^{eff}(F)$) the category of complexes of Nisnevich (resp. \'{e}tale) sheaves with transfers with homotopy invariant cohomology sheaves, and $\alpha^*$ the natural functor $DM_{-}^{eff}(F)\to DM_{-,\acute{e}t}^{eff}(F)$.
	Let $DM_{gm}^{eff}(F)$ be the category of effective geometrical motives which was defined by Voevodsky \cite{voevodsky2000triangulated}.
	There is a functor from $DM_{gm}^{eff}(F)$ to $DM_{-}^{eff}(F)$, induced by $\left[ X\right] \mapsto \underline{C}_*(X)$, where $\underline{C}_*(X)$ is the Suslin complex of $X$: this complex will be denoted by $M(X)$.

	In \cite{huber2006slice}, Huber and Kahn construct spectral sequences as follows, which are called slice spectral sequences :
	\begin{align*}
		&E_2^{p,q} = H^{p-q}(c_qM,\mathbb{Z}(n-q))\implies H^{p+q}(M,\mathbb Z(n))\\
		&E_2^{p,q} = H^{p-q}_\mathrm{\acute{e}t}(\alpha^*c_qM,\mathbb{Z}(n-q)_\mathrm{\acute{e}t})\implies H^{p+q}_\mathrm{\acute{e}t}(M,\mathbb Z(n)_\mathrm{\acute{e}t})
	\end{align*}
	where 
	$$
	\mathbb Z(n)_\mathrm{\acute{e}t}=
	\begin{cases}
		\alpha^*\mathbb{Z}(n), \quad n\ge0\\
		\bigoplus_{l\neq \mathrm{char} F}\mathbb{Q}_l/\mathbb{Z}_l(n)\left[ -1\right] , \quad n<0.\\
	\end{cases}
	$$
	The motives $c_qM$  are called the fundamental invariants of $M$ in \cite{huber2006slice}.
	There is an exact triangle in $DM_{-}^{eff}(F)$:
	$$
	\nu_nM\longrightarrow \nu_{<n+1}M\longrightarrow \nu_{<n} M\longrightarrow \nu_nM\left[ 1\right]. 
	$$
	where $\nu_nM=c_n(M)(n)\left[ 2n\right] $ and the functors $\nu_{<n}$ are defined in \cite{huber2006slice}.

	\section{Main results}
	
	\subsection{Motives of classifying space of algebraic tori} \label{subsec 3.1}
	
	We assume that $F$ is a perfect field.
	In this section, we prove that the motive of the classifying space $BT$ of a $F$-torus $T$ is a pure Tate motive, in order to write down the slice spectral sequence for $BT$. As a result, we compute the \'{e}tale motivic cohomology of $BT$ in low degree.
	
	Let $
	1\longrightarrow T\longrightarrow P\longrightarrow Q\longrightarrow 1
	$ be an exact sequence of algebraic tori such that $P$ is quasi-split.
	Since $P$ is quasi-split, we can assume that $P=\prod_{i} R_{K_i/F}(\mathbb{G}_{m,K_i})$, where $K_i / F$ are finite separable field extensions.
	Then the free diagonal action of $\mathbb{G}_{m,K_i}$ on $\mathbb{A}_{K_i}^{r+1}\setminus{\{0\}}$ induces, by Weil restriction, a free action of $P$ on $U=\prod_{i} R_{K_i/F}(\mathbb{A}_{K_i}^{r+1}\setminus{\{0\}})$.
	We write $X$ for $U/T$ and $Y$ for $U/P$.
	
	It is easy to see that $Y$ is geometrically cellular (actually it is cellular since Weil restriction preserves cellular structure).
	Note that $U\to X$ is a classifying $T$-torsor and $U\to Y$ is a classifying $P$-torsor, by \cite[5.3]{garibaldi2003cohomological}. In addition, $X\to Y$ is a $Q$-torsor.
	Let $BT$ (resp. $BP$) be a classifying space of $T$ (resp. $P$).
	Then $BT$ (resp. $BP$) can be approximated by $X$ (resp. $Y$) when $r$ is large enough (see for instance \cite{m2013}, Lemma A.4). In the following, the notation $BT$ will mean the classifying space $X$ constructed here, for $r$ big enough.
	
	In \cite{kahn1999motivic} and \cite{huber2006slice}, the authors provide a natural filtration of the motive of a cellular variety. In particular, since $Y$ is a geometrically cellular variety, there is a spectral sequence $
	E_2^{p,q} = H^{p-q}(F,\mathrm{CH}^q(Y_{sep})\otimes\mathbb Z(n-q))\implies H^{p+q}_\mathrm{\acute{e}t}(Y,\mathbb Z(n))
	$.
	We show below that this spectral sequence also holds for $X$, even though we don't know how to find a classifying torsor $U\to X$ such that $X$ is a geometrically cellular variety and $X$ approximates $BT$.

		\begin{prop}\label{cqBT}
		The complex $c^q(X_{sep}) \in D^b_f(Ab)$ is isomorphic to the following Koszul-like complex $K(X_{sep},q)$:
		\begin{align*}
			\Lambda^{q}\widehat{Q}\longrightarrow \Lambda^{q-1}\widehat{Q}\otimes \mathrm{CH}^{1}(Y_{sep}) \longrightarrow&\Lambda^{q-2}\widehat{Q}\otimes \mathrm{CH}^{2}(Y_{sep}) \longrightarrow\cdots\\
			\longrightarrow&\widehat{Q}\otimes \mathrm{CH}^{q-1}(Y_{sep}) \longrightarrow \mathrm{CH}^q(Y_{sep})
		\end{align*}
	where $\mathrm{CH}^q(Y_{sep})$ is in degree 0, and the maps are induced by interior products and the characteristic map $\theta:\widehat{Q}\to \mathrm{CH}^1(Y_{sep}) $.
	\end{prop}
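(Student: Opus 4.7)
Over $F_{sep}$, the torus $Q_{sep}$ is split, and a choice of basis $\chi_1, \ldots, \chi_n$ of the free abelian group $\widehat{Q}$ identifies it with $\mathbb{G}_m^n$. This identification turns the $Q_{sep}$-torsor $X_{sep} \to Y_{sep}$ into a fiber product of $n$ individual $\mathbb{G}_m$-torsors, the $i$-th of which corresponds to a line bundle $L_i$ on $Y_{sep}$ with first Chern class $\theta(\chi_i) \in \mathrm{CH}^1(Y_{sep})$. Equivalently, this yields a tower of $\mathbb{G}_m$-torsors $X_{sep} = X_n \to X_{n-1} \to \cdots \to X_0 = Y_{sep}$, and my plan is to compute the motive $M(X_{sep})$ by iteratively applying, at each step, the Gysin distinguished triangle
\[ M(X_i) \longrightarrow M(X_{i-1}) \xrightarrow{\,\cup\, c_1(L_i)\,} M(X_{i-1})(1)[2] \longrightarrow M(X_i)[1] \]
in $DM_{gm}^{eff}(F_{sep})$.

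Iterating this construction will endow $M(X_{sep})$ with a bounded filtration of length $n$, whose associated graded piece at level $k$ assembles intrinsically (via the $\binom{n}{k}$ subsets of size $k$) into $\Lambda^k(\widehat{Q}) \otimes M(Y_{sep})(k)[k]$. Since $Y_{sep}$ is cellular, its motive is pure Tate, and the Huber--Kahn fundamental invariant functor $c^q$ sends $M(Y_{sep})(k)[k]$ to $\mathrm{CH}^{q-k}(Y_{sep})$ in a single cohomological degree (and to zero for $k > q$). Applying $c^q$ to the filtration will therefore produce a complex in $D^b_f(Ab)$ whose terms agree with those of $K(X_{sep}, q)$, namely $\Lambda^{q-k}(\widehat{Q}) \otimes \mathrm{CH}^k(Y_{sep})$ for $k = 0, \ldots, q$.

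The remaining task is to identify the differentials, and this will be the main obstacle. Each connecting map in the iterated Gysin construction is cup product with $c_1(L_i) = \theta(\chi_i)$; under $c^q$ it becomes an interior product against $\theta(\chi_i)$ on the exterior factor. Assembling these contributions over the chosen basis and tracking the signs from the iterated extensions should exhibit the resulting boundary as the Koszul differential induced by $\theta : \widehat{Q} \to \mathrm{CH}^1(Y_{sep})$. The hardest steps will be the sign bookkeeping and the verification that the end result is canonically independent of the basis of $\widehat{Q}$; both issues should be handled by rewriting the construction in intrinsic terms of the $Q_{sep}$-torsor and the characteristic map, so that the complex $K(X_{sep}, q)$ depends only on the pair $(Y_{sep}, \theta)$.
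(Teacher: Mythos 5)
Your plan is correct and follows essentially the same route as the paper: the paper simply invokes Huber--Kahn's Proposition 8.10 (mimicking their Proposition 9.3 for split reductive groups), which packages exactly the iterated Gysin/Koszul filtration of the motive of a split torus torsor that you propose to rebuild by hand, and then uses cellularity of $Y_{sep}$ to concentrate the resulting spectral sequence in a single row. The points you flag as delicate --- identifying the $E_1$ differential with the Koszul differential attached to $\theta$ and checking independence of the chosen basis of $\widehat{Q}$ --- are precisely what the citation to Huber--Kahn covers.
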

	\begin{proof}
		The proof is the same as Huber and Khan's proof for split reductive groups \cite[Proposition 9.3]{huber2006slice}.
		Since $Y_{sep}$ is cellular, we have an isomorphism $c^q(Y_{sep})=\mathrm{CH}^q(Y_{sep})\left[ 0\right] $ from \cite[Proposition 4.11]{huber2006slice}.
		Note that $X_{sep}\to Y_{sep}$ is a $Q_{sep}$ torsor.
		Let $\Xi$ be the cocharacter module of $Q$.
		Applying \cite[Proposition 8.10]{huber2006slice} to $X_{sep}\to Y_{sep}$ and to the functor $H^0(c_m): TDM^{eff}_{gm} \to Ab$, there is a spectral sequence 
		$$
		E_1^{p,q} = H^{q}(c_{m-q}(Y_{sep})\otimes \Lambda^p\Xi)\implies H^{p+q}(c_{m}(X_{sep}))
		$$
		for each $m$.
		This spectral sequence is concentrated in the $q = 0$ row and degenerates at $E_2$. Therefore, we get that $c^q(X_{sep})$ is isomorphic to the dual of the complex in $E_1$, whence the result.
		
	\end{proof}

	\begin{lem}\label{koszulcomplex}
		$K(X_{sep}, q)$ is quasi-isomorphic to $S^q(\widehat{T})[0]$.
	\end{lem}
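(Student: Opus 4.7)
The plan is to identify $K(X_{sep},q)$ with the total-degree-$q$ strand of the Koszul complex attached to the inclusion $\widehat{Q}\hookrightarrow\widehat{P}$ (dual to $P\twoheadrightarrow Q$), and then compute its cohomology by splitting the exact sequence $0\to\widehat{Q}\to\widehat{P}\to\widehat{T}\to 0$ of character lattices as abelian groups.

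First I would describe $K(X_{sep},q)$ explicitly. Since $P$ is quasi-split, $P_{sep}$ is split, so $Y_{sep}$ is, up to large codimension, a product of projective spaces; for $r$ big enough this gives canonical isomorphisms $\mathrm{CH}^k(Y_{sep})\cong S^k(\widehat{P})$ for all $k\le q$. Under these the characteristic map $\theta\colon\widehat{Q}\to\mathrm{CH}^1(Y_{sep})$ becomes the injection $\widehat{Q}\hookrightarrow\widehat{P}$: indeed, pushing the $Q_{sep}$-torsor $X_{sep}\to Y_{sep}$ along a character $\chi_Q\in\widehat{Q}$ yields the same line bundle on $Y_{sep}$ as pushing the $P_{sep}$-torsor $U_{sep}\to Y_{sep}$ along $\chi_Q\circ(P\twoheadrightarrow Q)\in\widehat{P}$. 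Thus $K(X_{sep},q)$ becomes the total-degree-$q$ piece of the classical Koszul complex $\Lambda^\bullet\widehat{Q}\otimes S^\bullet\widehat{P}$, with differential
\[
\alpha_1\wedge\cdots\wedge\alpha_k\otimes s\;\longmapsto\;\sum_{i=1}^{k}(-1)^{i-1}\alpha_1\wedge\cdots\widehat{\alpha_i}\cdots\wedge\alpha_k\otimes\theta(\alpha_i)\cdot s.
\]

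Because $\widehat{T}$ is a free abelian group of finite rank, the sequence $0\to\widehat{Q}\to\widehat{P}\to\widehat{T}\to 0$ splits as abelian groups; fixing a splitting yields $\widehat{P}\cong\widehat{Q}\oplus\widehat{T}$ and $S^k\widehat{P}\cong\bigoplus_{i+j=k}S^i\widehat{Q}\otimes S^j\widehat{T}$. The Koszul differential only multiplies by elements of $\widehat{Q}$, so it preserves the grading by $\widehat{T}$-degree, and $K(X_{sep},q)$ decomposes as $\bigoplus_{j=0}^{q}L_{q-j}^\bullet\otimes S^j\widehat{T}$, where $L_m^\bullet$ denotes the total-degree-$m$ strand of the standard Koszul complex $\Lambda^\bullet\widehat{Q}\otimes S^\bullet\widehat{Q}$. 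That complex is a free resolution of $\mathbb{Z}=S^\bullet\widehat{Q}/S^{>0}\widehat{Q}$, concentrated in total degree $0$; hence $L_m^\bullet$ is acyclic for $m>0$, while $L_0^\bullet=\mathbb{Z}$ sits at the rightmost position of the corresponding summand of $K(X_{sep},q)$, i.e.\ in cohomological degree $0$. Summing, only the $j=q$ contribution survives, yielding a quasi-isomorphism $K(X_{sep},q)\simeq S^q(\widehat{T})[0]$.

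The main obstacle is the first step: one must verify that the differential of $K(X_{sep},q)$ produced by Proposition \ref{cqBT} really is the Koszul differential written above under the identifications $\mathrm{CH}^k(Y_{sep})\cong S^k\widehat{P}$ and $\theta=(\widehat{Q}\hookrightarrow\widehat{P})$. This is a naturality check for the spectral sequence of \cite[Proposition 8.10]{huber2006slice} applied to the $Q_{sep}$-torsor $X_{sep}\to Y_{sep}$, combined with the standard computation of the characteristic map for a torus quotient. Once this identification is secured, the remaining linear-algebra decomposition is purely formal, with the freeness of $\widehat{T}$ providing precisely the splitting of character lattices needed.
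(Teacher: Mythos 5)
Your proposal is correct, and its first half coincides with the paper's argument: both proofs reduce the statement to identifying $K(X_{sep},q)$ with the total-degree-$q$ strand $K(\alpha,q)$ of the Koszul complex of the injection $\alpha\colon \widehat{Q}\hookrightarrow\widehat{P}$, via the Edidin--Graham isomorphisms $\mathrm{CH}^k(Y_{sep})\cong S^k(\widehat{P})$ and the compatibility of the characteristic maps of the $Q_{sep}$-torsor $X_{sep}\to Y_{sep}$ and the $P_{sep}$-torsor $U_{sep}\to Y_{sep}$. Where you diverge is the acyclicity step: the paper simply invokes Illusie (Proposition 4.3.1.6 of \cite{ill71}) to get the quasi-isomorphism $K(\alpha,q)\xrightarrow{\sim}S^q(\widehat{T})$ induced by $S^q(\beta)$, whereas you prove it by hand, splitting $0\to\widehat{Q}\to\widehat{P}\to\widehat{T}\to0$ as abelian groups (legitimate since $\widehat{T}$ is free), decomposing the strand as $\bigoplus_j L^\bullet_{q-j}\otimes S^j(\widehat{T})$, and using acyclicity of the positive strands of the standard Koszul complex of $\widehat{Q}$. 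Your version is self-contained and elementary; the paper's is shorter. One point you should make explicit: the splitting is not $\Gamma$-equivariant, so to conclude as \emph{Galois modules} you must observe that the comparison map you end up with is the $\Gamma$-equivariant map $S^q(\beta)\colon S^q(\widehat{P})\to S^q(\widehat{T})$ (which kills $\alpha(\widehat{Q})\cdot S^{q-1}(\widehat{P})$), and that the non-equivariant splitting is used only to verify that this map is a quasi-isomorphism of underlying complexes of abelian groups --- a property insensitive to the $\Gamma$-action. With that sentence added, your argument is a complete replacement for the citation.
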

	\begin{proof}
		Note that the sequence $
		1 \to \widehat{Q} \xrightarrow{\alpha} \widehat{P}\xrightarrow{\beta} \widehat{T}\to 1
		$ is exact and that the natural morphism $\theta : \widehat{P} \to \mathrm{CH}^1(Y_{sep})$ induces isomorphisms of Galois modules $\mathrm{CH}^q(Y_{sep}) \cong S^q(\widehat{P})$ for all $p$, by \cite[Lemma 2]{edidin1994characteristic}.
		
		In addition, for all $i$, $j$, the diagram 
        \[
        \xymatrix{
        \Lambda^i \widehat{Q} \otimes S^j(\widehat{P}) \ar[r]^{\varphi_{i,j}} \ar[d]^\sim & \Lambda^{i-1} \widehat{Q} \otimes S^{j+1}(\widehat{P}) \ar[d]^\sim \\
        \Lambda^i \widehat{Q} \otimes \mathrm{CH}^{j}(Y_{sep}) \ar[r] & \Lambda^{i-1} \widehat{Q} \otimes \mathrm{CH}^{j+1}(Y_{sep})
        }
		\]
		is commutative, where the vertical isomorphisms are induced by the aforementionned isomorphisms $S^j(\widehat{P}) \xrightarrow{\sim} \mathrm{CH}^j(Y_{sep})$, the lower horizontal map is the one in the complex $K(X_{sep}, i+j)$, and $\varphi_{i,j}$ is defined by 
		\[\varphi_{i,j}((\chi_1 \wedge \dots \wedge \chi_i) \otimes (\xi_1 \dots \xi_j)) := \sum_{k=1}^i (-1)^{k+1} (\chi_1 \wedge \dots \wedge \widehat{\chi_k} \wedge \dots \chi_i) \otimes (\alpha(\chi_k) \xi_1 \dots \xi_j) \, .\]
	    Therefore, the complex $K(X_{sep}, q)$ is isomorphic to the Koszul complex $K(\alpha, q)$ defined by
	    \[\Lambda^{q}\widehat{Q} \xrightarrow{\varphi_{q,0}} \Lambda^{q-1}\widehat{Q} \otimes \widehat{P} \xrightarrow{\varphi_{q-1, 1}} \Lambda^{q-2}\widehat{Q} \otimes S^{2}(\widehat{P}) \xrightarrow{\varphi_{q-2, 2}} \dots \xrightarrow{\varphi_{2, q-2}} \widehat{Q}\otimes S^{q-1}(\widehat{P}) \xrightarrow{\varphi_{1, q-1}} S^q(\widehat{P}) \, .\]
    	And by \cite{ill71}, Proposition 4.3.1.6, the natural morphism $S^q(\beta) : S^q(\widehat{P}) \to S^q(\widehat{T})$ induces a quasi-isomorphism 
    	\[K(\alpha, q) \xrightarrow{\sim} S^q(\widehat{T}) \, ,\]
		which proves the lemma.
	\end{proof}

	\begin{cor}\label{keyspectralsequence}
		For all $n\ge 0$ there is a spectral sequence
		$$
		E_2^{p,q} = H^{p-q}(F,S^q(\widehat{T})\otimes\mathbb Z(n-q))\implies H^{p+q}_\mathrm{\acute{e}t}(X,\mathbb Z(n)).
		$$
	\end{cor}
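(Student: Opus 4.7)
The plan is to apply the \'etale slice spectral sequence of Huber and Kahn to the motive $M(X)$, and identify the $E_2$ page via the two preceding results. The spectral sequence yields
\[E_2^{p,q} = H^{p-q}_\mathrm{\acute{e}t}(\alpha^*c_qM(X),\mathbb{Z}(n-q)_\mathrm{\acute{e}t})\implies H^{p+q}_\mathrm{\acute{e}t}(X,\mathbb{Z}(n)_\mathrm{\acute{e}t}),\]
and for $n \geq 0$ the abutment is just $H^{p+q}_\mathrm{\acute{e}t}(X, \mathbb{Z}(n))$ by definition of $\mathbb{Z}(n)_\mathrm{\acute{e}t}$. So the entire task reduces to rewriting the $E_2$ term as $H^{p-q}(F, S^q(\widehat{T}) \otimes \mathbb{Z}(n-q))$.

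The first step is to observe that Proposition \ref{cqBT} and Lemma \ref{koszulcomplex} are entirely Galois-equivariant. The Koszul-like complex $K(X_{sep}, q)$ inherits a natural $\Gamma$-action via the Galois actions on $\widehat{Q}$, $\widehat{P}$ and $\mathrm{CH}^*(Y_{sep}) \cong S^*(\widehat{P})$; the identification with the Koszul complex $K(\alpha, q)$ attached to $\alpha : \widehat{Q} \hookrightarrow \widehat{P}$ is $\Gamma$-equivariant; and Illusie's quasi-isomorphism $K(\alpha, q) \xrightarrow{\sim} S^q(\widehat{T})[0]$ is induced by the $\Gamma$-equivariant morphism $S^q(\beta)$. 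Hence the object $\alpha^* c_q M(X)$, viewed in the derived category of discrete $\Gamma$-modules (i.e.\ as an Artin motive over $F$), is quasi-isomorphic to the lattice $S^q(\widehat{T})$ placed in degree $0$.

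The second step is to identify $H^{p-q}_\mathrm{\acute{e}t}(\alpha^*c_qM(X), \mathbb{Z}(n-q)_\mathrm{\acute{e}t})$ with $H^{p-q}(F, S^q(\widehat{T}) \otimes \mathbb{Z}(n-q))$. Since $\alpha^* c_q M(X)$ is concentrated in weight $0$ and corresponds to the $\mathbb{Z}$-flat $\Gamma$-lattice $S^q(\widehat{T})$, this is a combination of two standard facts: (i) the \'etale motivic cohomology of an Artin motive with coefficients in $\mathbb{Z}(m)_\mathrm{\acute{e}t}$ computes the Galois hypercohomology of the associated complex of $\Gamma$-modules tensored with the motivic complex on $F_{sep}$; and (ii) flatness of $S^q(\widehat{T})$ allows one to pull this tensor factor out of the derived coefficients, yielding $H^{p-q}(F, S^q(\widehat{T}) \otimes \mathbb{Z}(n-q))$.

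The main obstacle is making the last identification fully precise, particularly in the range $n - q < 0$, where $\mathbb{Z}(n-q)_\mathrm{\acute{e}t} = \bigoplus_{l \neq \mathrm{char}\, F} \mathbb{Q}_l/\mathbb{Z}_l(n-q)[-1]$ is not itself a motivic complex and the right-hand side of the target formula must be interpreted through this same convention. Once this compatibility between \'etale motivic cohomology of Artin motives and Galois hypercohomology is in hand, the rest of the argument is just a packaging of Proposition \ref{cqBT} and Lemma \ref{koszulcomplex} into the Huber--Kahn slice spectral sequence.
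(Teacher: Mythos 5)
Your argument is correct and is essentially the paper's own proof: both feed Proposition \ref{cqBT} and Lemma \ref{koszulcomplex} into the Huber--Kahn \'etale slice spectral sequence and identify the $E_2$-page with Galois (hyper)cohomology of $S^q(\widehat{T})\otimes\Z(n-q)$, with the same convention for $\Z(n-q)_{\mathrm{\acute{e}t}}$ when $n-q<0$. The only cosmetic difference is that the paper first packages these computations into an explicit pure-Tate splitting $M(X_{sep})\cong\bigoplus_{p}\mathrm{CH}^p(X_{sep})^*\otimes\Z(p)[2p]$, proved by induction on the triangles relating $\nu_n$ and $\nu_{<n}$, and then pulls the resulting filtration back to the big \'etale site of $\mathrm{Spec}(F)$, whereas you identify the slices $c_qM(X)$ directly as Artin motives over $F$.
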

	\begin{proof}
		The key point is to show that the motive of $X_{sep}$ is a pure Tate motive in the sense of \cite[Definition 4.9]{huber2006slice}.
		That is, $M(X_{sep})\cong \bigoplus_{p\ge 0}\mathrm{CH}^p(X_{sep})^*\otimes \mathbb{Z}(p)\left[ 2p\right]$ in $DM^{eff}_{gm}(F_{sep})$.
		
		Indeed, there is natural isomorphism (see \cite[Corollary 4.2.5]{voevodsky2000triangulated}):
		$$
		\mathrm{CH}^p(X_{sep})\cong \mathrm{Hom}(M(X_{sep}), \mathbb{Z}(p)\left[ 2p\right]).
		$$
		Since $\mathrm{CH}^p(X_{sep})$ is natural isomorphic to $S^p(\widehat{T})$ \cite[Lemma 2]{edidin1994characteristic}, which is a finitely generated free abelian group, we get a natural map:
		$$\varphi : 
		M(X_{sep})\to \bigoplus_{p\ge 0}\mathrm{CH}^p(X_{sep})^*\otimes \mathbb{Z}(p)\left[ 2p\right].
		$$
		
		Let $M=M(X_{sep}),\ M'=\bigoplus_{p\ge 0}\mathrm{CH}^p(X_{sep})^*\otimes \mathbb{Z}(p)\left[ 2p\right]$ and we now prove that the above morphism $\varphi : M \rightarrow M'$ is an isomorphism.
		First, it is easy to see that $c_nM'=\mathrm{CH}^p(X_{sep})^*$, and Proposition \ref{cqBT} and Lemma \ref{koszulcomplex} imply that the morphism $\varphi$ induces isomorphisms $c_nM \xrightarrow{\sim} c_nM'$, and hence isomorphisms $\nu_nM \xrightarrow{\sim} \nu_nM'$, since by definition $\nu_nM=c_n(M)(n)\left[ 2n\right] $.
		In addition, $\varphi$ induces a natural map of exact triangles
		$$
		\begin{tikzcd}
			\nu_nM\arrow[d,"\simeq"]\arrow[r]&\nu_{<n+1}M\arrow[d]\arrow[r]&\nu_{<n} M\arrow[d]\arrow[r]&\nu_nM\left[ 1\right]\arrow[d,"\simeq"]\\
			\nu_nM'\arrow[r]&\nu_{<n+1}M'\arrow[r]&\nu_{<n} M'\arrow[r]&\nu_nM'\left[ 1\right] \, .
		\end{tikzcd}
		$$
		We deduce by induction that for all $n$, the map $\nu_{<n} M \xrightarrow{\sim} \nu_{<n} M'$ is an isomorphism, hence $\varphi : M \to M'$ itself is an isomorphism since $\nu_{<n} M = M$ and $\nu_{<n}M' = M'$ for $n$ big enough.
		
		This gives us the required spectral sequence by pulling back the filtration to the big \'etale site of $\mathrm{Spec}(F)$.
	\end{proof}
	
	We write $K^M_i$ for the Milnor's $K$-group, $K_i$ for the general $K$-group and $K_i(-)_{ind}$ for the quotient $K_i/K^M_i$.
	
	\begin{cor}\label{keycorollary}
		Let $T$ be a torus over a perfect field $F$ and $X$ as above.
		Then we have:
		$$
		\bar{H}^i_\mathrm{\acute{e}t}(X_{sep},\mathbb{Z}(3))=
		\begin{cases}
			0& i=0,1,2\\
			\widehat{T}\otimes H^1(F_{sep},\mathbb{Z}(2))\simeq \widehat{T}\otimes K_3(F_{sep})_{ind} & i=3\\
			\widehat{T}\otimes K^M_2(F_{sep})& i=4\\
			S^2(\widehat{T})\otimes F^*_{sep}& i=5\\
			S^3(\widehat{T})& i=6\\
		\end{cases}
		$$
		Moreover, there is an exact sequence:
		\begin{align*}
			\left(\widehat{T}\otimes K^M_2(F_{sep})\right)^\Gamma \longrightarrow H^2(F,\widehat{T}\otimes \mathbb{Q/Z}(2)) \longrightarrow \bar{H}^5_\mathrm{\acute{e}t}(X,\mathbb{Z}(3))\longrightarrow& \left(S^2(\widehat{T}) \otimes F_{sep}^*\right)^\Gamma\longrightarrow \\
			H^3(F,\widehat{T}\otimes \mathbb{Q/Z}(2)) \longrightarrow \ker \left(\bar{H}^6_\mathrm{\acute{e}t}(X,\mathbb{Z}(3))\longrightarrow S^3(\widehat{T})^\Gamma \right)\longrightarrow& H^1(F,S^2(\widehat{T})\otimes F^*_{sep}).
		\end{align*}
		This exact sequence holds for $\mathrm{char}\ F=0$. 
		If $\mathrm{char}\ F=p>0$, then the conclusion still holds after changing $\mathbb{Q/Z}(2)$ in the Galois cohomology groups of $F$ to $\mathbb{Q/Z}(2)[1/p]$.
	\end{cor}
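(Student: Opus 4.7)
The plan is to apply the spectral sequence from Corollary \ref{keyspectralsequence} at $n=3$ and read off $\bar{H}^i_\mathrm{\acute{e}t}(X,\mathbb{Z}(3))$ from the resulting filtration, working with the reduced variant (which kills the $q=0$ column, since $c_0$ of the reduced motive vanishes) so that we compute $\bar{H}^i$ directly.

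For the identification of $\bar{H}^i_\mathrm{\acute{e}t}(X_{sep},\mathbb{Z}(3))$, first base change to $F_{sep}$, where Galois cohomology collapses and the $E_2$-page reads
\[E_2^{p,q}=S^q(\widehat{T})\otimes H^{p-q}_\mathrm{\acute{e}t}(F_{sep},\mathbb{Z}(3-q)).\]
Standard facts over a separably closed field give $H^1(F_{sep},\mathbb{Z}(2))=K_3(F_{sep})_{ind}$, $H^2(F_{sep},\mathbb{Z}(2))=K^M_2(F_{sep})$, and vanishing outside $\{1,2\}$ (Beilinson--Lichtenbaum together with the vanishing of étale cohomology of a separably closed field with torsion coefficients); similarly $H^1(F_{sep},\mathbb{Z}(1))=F_{sep}^*$ with the rest zero (since $\mathbb{Z}(1)=\mathbb{G}_m[-1]$ and $\mathrm{Br}(F_{sep})=0$), and $\mathbb{Z}(0)=\mathbb{Z}$. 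In total degree $\leq 6$ this leaves exactly one nonzero $E_2$-term per degree, and a direct inspection shows all differentials in or out vanish for degree/weight reasons, yielding the stated formulas for $\bar{H}^i_\mathrm{\acute{e}t}(X_{sep},\mathbb{Z}(3))$.

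For the exact sequence over $F$, the surviving $E_2$-entries (with $q\geq 1$) in total degree $5$ and $6$ are
\[E_2^{4,1}=H^3_\mathrm{\acute{e}t}(F,\widehat{T}\otimes\mathbb{Z}(2)),\quad E_2^{3,2}=(S^2(\widehat{T})\otimes F_{sep}^*)^\Gamma,\quad E_2^{5,1}=H^4_\mathrm{\acute{e}t}(F,\widehat{T}\otimes\mathbb{Z}(2)),\]
\[E_2^{4,2}=H^1(F,S^2(\widehat{T})\otimes F_{sep}^*),\quad E_2^{3,3}=S^3(\widehat{T})^\Gamma,\]
using $\mathbb{Z}(1)=\mathbb{G}_m[-1]$ to rewrite $\mathbb{Z}(1)$-cohomology as ordinary Galois cohomology. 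To put the $\widehat{T}\otimes\mathbb{Z}(2)$-cohomology in the form appearing in the statement, apply the distinguished triangle $\mathbb{Z}(2)\to\mathbb{Q}(2)\to\mathbb{Q}/\mathbb{Z}(2)$. A Hochschild--Serre argument, using that $H^j(F_{sep},\mathbb{Q}(2))$ is uniquely divisible and vanishes for $j\geq 3$, gives $H^i(F,\widehat{T}\otimes\mathbb{Q}(2))=(\widehat{T}\otimes H^i(F_{sep},\mathbb{Q}(2)))^\Gamma$, which is in particular zero for $i\geq 3$. Plugging into the triangle:
\[H^3_\mathrm{\acute{e}t}(F,\widehat{T}\otimes\mathbb{Z}(2))=\mathrm{coker}\Bigl((\widehat{T}\otimes K^M_2(F_{sep}))^\Gamma\to H^2(F,\widehat{T}\otimes\mathbb{Q}/\mathbb{Z}(2))\Bigr),\]
\[H^4_\mathrm{\acute{e}t}(F,\widehat{T}\otimes\mathbb{Z}(2))=H^3(F,\widehat{T}\otimes\mathbb{Q}/\mathbb{Z}(2)).\]

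It remains to splice the filtrations. For $\bar{H}^5$, one finds $E_\infty^{4,1}=E_2^{4,1}$ (no nontrivial $d_r$) and $E_\infty^{3,2}=\ker(d_2:E_2^{3,2}\to E_2^{5,1})$, producing an extension $0\to H^3_\mathrm{\acute{e}t}(F,\widehat{T}\otimes\mathbb{Z}(2))\to\bar{H}^5_\mathrm{\acute{e}t}(X,\mathbb{Z}(3))\to\ker(d_2)\to 0$. For $\bar{H}^6$, the edge map $\bar{H}^6\to S^3(\widehat{T})^\Gamma$ has image $E_\infty^{3,3}$ and kernel $F^4$, and $F^4$ is itself an extension of $E_\infty^{4,2}\subseteq H^1(F,S^2(\widehat{T})\otimes F_{sep}^*)$ by $E_\infty^{5,1}=\mathrm{coker}(d_2:E_2^{3,2}\to E_2^{5,1})$. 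Splicing these two extensions through the connecting maps from the $\mathbb{Z}(2)\to\mathbb{Q}(2)\to\mathbb{Q}/\mathbb{Z}(2)$ triangle yields the claimed seven-term sequence. The main obstacle is controlling all potentially nonzero differentials in the relevant range; this ultimately reduces to the vanishing statements $H^j(F_{sep},\mathbb{Z}(n))=0$ for $j\notin[1,n]$ and $H^j(F_{sep},\widehat{T}\otimes\mathbb{Q}(2))=0$ for $j\geq 3$, both consequences of Beilinson--Lichtenbaum and standard vanishing over a separably closed field. In characteristic $p>0$, $\mathbb{Z}(n)_\mathrm{\acute{e}t}$ only sees the prime-to-$p$ part, so the triangle becomes $\mathbb{Z}(2)_\mathrm{\acute{e}t}\to\mathbb{Q}(2)\to\mathbb{Q}/\mathbb{Z}(2)[1/p]$ and $\mathbb{Q}/\mathbb{Z}(2)$ is systematically replaced by $\mathbb{Q}/\mathbb{Z}(2)[1/p]$ throughout.
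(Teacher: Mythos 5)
Your proposal is correct, and the first half (the computation of $\bar{H}^i_\mathrm{\acute{e}t}(X_{sep},\Z(3))$ from the slice spectral sequence over $F_{sep}$ plus the known motivic cohomology of a separably closed field) is exactly what the paper does. For the exact sequence over $F$, however, you take a genuinely different route. The paper feeds the geometric answer into the Hochschild--Serre spectral sequence $H^p(F,\bar{H}^q_\mathrm{\acute{e}t}(X_{sep},\Z(3)))\Rightarrow\bar{H}^{p+q}_\mathrm{\acute{e}t}(X,\Z(3))$: since $\bar{H}^q(X_{sep},\Z(3))$ vanishes for $q\le 2$ and equals $\widehat{T}\otimes K_3(F_{sep})_{ind}$, $\widehat{T}\otimes K_2^M(F_{sep})$, $S^2(\widehat{T})\otimes F_{sep}^*$, $S^3(\widehat{T})$ in degrees $3,4,5,6$, the low-degree exact sequence drops out once one knows $K_2^M(F_{sep})$ is uniquely divisible and $K_3(F_{sep})_{ind}$ is divisible with torsion $\Q/\Z(2)$ (Merkurjev--Suslin, Weibel), which converts $H^i(F,\widehat{T}\otimes K_3(F_{sep})_{ind})$ into $H^i(F,\widehat{T}\otimes\Q/\Z(2))$. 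You instead run the arithmetic slice spectral sequence of Corollary \ref{keyspectralsequence} directly over $F$, so your intermediate objects are $H^{3}_\mathrm{\acute{e}t}(F,\widehat{T}\otimes\Z(2))$ and $H^{4}_\mathrm{\acute{e}t}(F,\widehat{T}\otimes\Z(2))$ rather than Galois cohomology of indecomposable $K_3$, and you convert these via the coefficient triangle $\Z(2)\to\Q(2)\to\Q/\Z(2)$ together with the vanishing of $H^{\ge 3}(F,\widehat{T}\otimes\Q(2))$. The two organizations are the two ways of collapsing the same double filtration; yours trades the $K$-theoretic input ($K_3(F_{sep})_{ind}$ divisible with torsion $\Q/\Z(2)$) for standard weight-$2$ Beilinson--Lichtenbaum facts and a more careful bookkeeping of $d_2$ and $d_3$ differentials (which you do carry out correctly, including noting that $E_\infty^{4,2}$ and $E_\infty^{3,3}$ are only subgroups of $H^1(F,S^2(\widehat{T})\otimes F_{sep}^*)$ and $S^3(\widehat{T})^\Gamma$, which is all the stated sequence requires). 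The one place where the paper's route is cleaner is characteristic $p$: the statement ``$\Z(2)_\mathrm{\acute{e}t}$ only sees the prime-to-$p$ part'' is not literally true (the $p$-part of $\Z(2)_\mathrm{\acute{e}t}$ involves logarithmic de Rham--Witt sheaves), so your triangle $\Z(2)_\mathrm{\acute{e}t}\to\Q(2)\to\Q/\Z(2)[1/p]$ needs a word of justification that the $p$-primary contribution is absent or discarded, whereas the paper localizes this issue entirely in the known structure of $K_3(F_{sep})_{ind}$ in characteristic $p$.
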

	\begin{proof}
		For $\bar{H}^i_\mathrm{\acute{e}t}(X_{sep},\mathbb{Z}(3))$, it is a consequence of Corollary \ref{keyspectralsequence} and \cite[Theorem 1.1]{kahn1996applications} over $F_{sep}$.
		As for the exact sequence of $\bar{H}^i_\mathrm{\acute{e}t}(X,\mathbb{Z}(3))$, from Lemma \ref{k-group} and the Hochschild–Serre spectral sequence (see \cite[Appendix B-IV]{m2013} and  \cite{RS18}, (3.5)):
		$$E_2^{p,q} = H^{p}(F,\bar{H}^q_\mathrm{\acute{e}t}(X_{sep},\mathbb{Z}(3)))\implies \bar{H}^{p+q}_\mathrm{\acute{e}t}(X,\mathbb Z(3)),$$
		we obtain an exact sequence:
		\begin{align*}
			\left(\widehat{T} \otimes K^M_2(F_{sep})\right)^\Gamma \longrightarrow H^2(F,\widehat{T}\otimes K_3(F_{sep})_{ind}) \longrightarrow \bar{H}^5_\mathrm{\acute{e}t}(X,\mathbb{Z}(3))\longrightarrow& \left(S^2(\widehat{T}) \otimes F_{sep}^*\right)^\Gamma\longrightarrow \\
			H^3(F,\widehat{T}\otimes K_3(F_{sep})_{ind}) \longrightarrow \ker \left(\bar{H}^6_\mathrm{\acute{e}t}(X,\mathbb{Z}(3))\longrightarrow S^3(\widehat{T})^\Gamma \right)\longrightarrow& H^1(F,S^2(\widehat{T})\otimes F^*_{sep}).
		\end{align*}
		Because of \cite[Theorem 11.1]{ASM1991} and \cite[VI.1.6]{weibel2013k}, $K_3(F_{sep})_{ind}$ is divisible and its torsion subgroup is $\mathbb{Q/Z}(2)$, therefore $H^i(F,\widehat{T}\otimes K_3(F_{sep})_{ind})\simeq H^i(F,\widehat{T}\otimes \mathbb{Q/Z}(2))$ (see \cite[VI.1.3.1]{weibel2013k} for the case of $\mathrm{char}\ F=p>0$).
		This proves the results.
	\end{proof}

	\begin{lem}\label{k-group}
		$K_i(F)$ is uniquely divisible if $F$ is algebraically closed and $i\ge 2$.
	\end{lem}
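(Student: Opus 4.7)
The plan is to appeal to Suslin's theorem on the $K$-theory of algebraically closed fields. The starting point is Suslin's rigidity theorem, which for any integer $m$ invertible in $F$ identifies $K_\ast(F; \mathbb{Z}/m)$ with the topological $K$-theory $K_\ast^{\mathrm{top}}(\mathbb{C}; \mathbb{Z}/m)$. By Bott periodicity this latter group is $\mathbb{Z}/m$ in even degrees and vanishes in odd degrees.

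I would then plug this into the universal coefficient short exact sequence
\[0 \longrightarrow K_i(F)/m \longrightarrow K_i(F; \mathbb{Z}/m) \longrightarrow K_{i-1}(F)[m] \longrightarrow 0\]
and combine it with Suslin's explicit identification of the $m$-torsion $K_{2k-1}(F)[m] \cong \mu_m^{\otimes k}$, which is cyclic of order $m$ for $F$ algebraically closed. A count of orders in the UCT then forces $K_i(F)/m = 0$ for all $i \geq 2$ and every $m$ invertible in $F$: in the even case the right-hand surjection goes between cyclic groups of the same order and hence is an isomorphism, while in the odd case the middle term already vanishes. The same exact sequence, read in the other direction, controls $K_i(F)[m]$ in the appropriate range. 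In characteristic $p > 0$, the $p$-primary part is handled by the Geisser--Levine identification of $K_\ast(F; \mathbb{Z}/p^r)$ with logarithmic de Rham--Witt cohomology $W_r \Omega_{F, \log}^{\ast}$, which vanishes for $F$ algebraically closed.

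The principal obstacle is marshalling Suslin's rigidity theorem and the Geisser--Levine description in the correct generality (treating both the characteristic zero and positive characteristic cases uniformly); once those foundational inputs are in place, the lemma reduces to straightforward bookkeeping with the universal coefficient sequence.
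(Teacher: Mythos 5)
There is a genuine problem here, and it comes from which $K$-theory the lemma is actually about. The paper's own proof is a one-line citation to Bass--Tate (Proposition 1.2), which is a statement about \emph{Milnor} $K$-groups: for $F$ algebraically closed, $K_n^M(F)$ is divisible because $F^*$ is and $K_n^M$ is generated by symbols, and Bass--Tate's symbol-theoretic argument shows the torsion vanishes for $n\ge 2$. That is also the only way the lemma is used in the paper (to make $H^p(F,\widehat{T}\otimes K_2^M(F_{sep}))$ vanish for $p\ge 1$ in Corollary \ref{keycorollary}, and similarly for $K_3^M$ in Corollary \ref{keycorollary2}). Your proposal instead addresses \emph{Quillen} $K$-theory via Suslin rigidity, and for Quillen $K$-theory the statement is false in odd degrees $\ge 3$: the very fact you invoke, $K_{2k-1}(F)[m]\cong\mu_m^{\otimes k}\neq 0$ for every $m$ invertible in $F$, shows that $K_{2k-1}(F)$ has torsion (in the colimit, $K_{2k-1}(F)_{tors}\cong\Q/\Z(k)$ away from the characteristic), so for instance $K_3(\mathbb{C})$ is divisible but not uniquely divisible. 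Your universal-coefficient order count does correctly establish divisibility of $K_i(F)$, but the closing claim that the same sequence ``controls $K_i(F)[m]$'' cannot deliver torsion-freeness, because the torsion genuinely does not vanish; your own inputs are the counterexample. Note that the paper itself relies on this non-vanishing elsewhere: the proof of Corollary \ref{keycorollary} records that $K_3(F_{sep})_{ind}$ has torsion subgroup $\Q/\Z(2)$.

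To repair the argument you should prove the statement for Milnor $K$-groups, which is what the lemma is used for despite the paper's notational convention that $K_i$ denotes the general $K$-group. There no rigidity theorem is needed: divisibility is immediate from the divisibility of $F^*$, and the vanishing of torsion for $n\ge 2$ is exactly the content of the cited Bass--Tate proposition (the $m$-torsion is accounted for by symbols built from roots of unity, and these die over an algebraically closed field); alternatively one can quote the Bloch--Kato/norm-residue description of $K_n^M(F)/m\cong H^n(F,\mu_m^{\otimes n})=0$ together with the known structure of the torsion. If one insists on the Quillen groups, the correct statement is that $K_{2k}(F)$ is uniquely divisible while $K_{2k-1}(F)$ is divisible with torsion $\Q/\Z(k)$ (prime to the characteristic), which is weaker than the lemma as written.
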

	\begin{proof}
		See \cite[Proposition 1.2]{bass1973milnor}.
	\end{proof}

	\begin{cor}\label{keycorollary2}
		Let $T$ be a torus over a perfect field $F$ and $X$ as above.
		Then we have:
		$$
		\bar{H}^i_\mathrm{\acute{e}t}(X_{sep},\mathbb{Z}(4))=
		\begin{cases}
			0 & i=0,1\\
			\widehat{T}\otimes H^{i-2}(F_{sep},\mathbb{Z}(3))& i=2,3,4\\
			S^2(\widehat{T})\otimes K^M_2(F_{sep})& i=6\\
			S^3(\widehat{T})\otimes F^*_{sep}& i=7\\
		\end{cases}
		$$
		and 
		$$
		0\longrightarrow \widehat{T}\otimes K^M_3(F_{sep}) \longrightarrow   \bar{H}^5_\mathrm{\acute{e}t}(X_{sep},\mathbb{Z}(4))\longrightarrow S^2(\widehat{T})\otimes K_3(F_{sep})_{ind} \longrightarrow 0
		$$ 
		is exact.
	\end{cor}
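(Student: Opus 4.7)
The strategy mirrors the proof of Corollary \ref{keycorollary}. I would apply the spectral sequence of Corollary \ref{keyspectralsequence} with $n = 4$, base-changed to $F_{sep}$, which yields
\[E_2^{p, q} = S^q(\widehat{T}) \otimes H^{p-q}(F_{sep}, \mathbb{Z}(4-q)_{\mathrm{\acute{e}t}}) \Longrightarrow \bar{H}^{p+q}_{\mathrm{\acute{e}t}}(X_{sep}, \mathbb{Z}(4)),\]
where the $\widehat{T}$-factor pulls out since $S^q(\widehat{T})$ is a constant free abelian group over $F_{sep}$, and the $q = 0$ column is killed by the $\bar{H}$ convention.

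Next, I would fill in the remaining columns using the motivic cohomology of $F_{sep}$ in low weight, the inputs being exactly those used in Corollary \ref{keycorollary}: Nesterenko--Suslin--Totaro gives $H^n(F_{sep}, \mathbb{Z}(n)) = K^M_n(F_{sep})$, the Beilinson--Lichtenbaum comparison together with triviality of Galois cohomology of a separably closed field give $H^i(F_{sep}, \mathbb{Z}(n)_{\mathrm{\acute{e}t}}) = 0$ for $i > n$ (away from the characteristic), Kahn's \cite[Theorem 1.1]{kahn1996applications} gives $H^1(F_{sep}, \mathbb{Z}(2)) = K_3(F_{sep})_{ind}$, and $H^1(F_{sep}, \mathbb{Z}(1)) = F_{sep}^*$. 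With these, for each $i \in \{0, 1, 2, 3, 4, 6, 7\}$ at most one bidegree $(p, q)$ with $p + q = i$ and $q \geq 1$ yields a non-zero $E_2^{p, q}$, immediately producing the isolated identifications in the statement. For $i = 5$ precisely two bidegrees survive:
\[E_2^{4, 1} = \widehat{T} \otimes K^M_3(F_{sep}) \quad \text{and} \quad E_2^{3, 2} = S^2(\widehat{T}) \otimes K_3(F_{sep})_{ind}.\]

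The final step is the vanishing of all potentially non-trivial differentials entering or leaving the surviving bidegrees in total degree at most $7$, and the identification of the extension on $\bar{H}^5$. The vanishing is the most technical point but reduces to routine bookkeeping: each column $q$ is concentrated in a narrow range of rows, so any $d_r$ with $r \geq 2$ from a surviving bidegree lands in a column whose entry in the required row vanishes, and symmetrically for incoming differentials. In the induced slice filtration on $\bar{H}^5$, indexed by $p$, the deepest subgroup is $F^4 = E_\infty^{4, 1} = \widehat{T} \otimes K^M_3(F_{sep})$ and the successive quotient is $F^3/F^4 = E_\infty^{3, 2} = S^2(\widehat{T}) \otimes K_3(F_{sep})_{ind}$, yielding the stated short exact sequence in the correct direction.
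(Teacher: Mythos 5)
Your argument is exactly the paper's: the proof given there reads ``similar to the proof of the corollary above,'' i.e.\ to Corollary \ref{keycorollary}, which runs the slice spectral sequence of Corollary \ref{keyspectralsequence} over $F_{sep}$ and substitutes the known \'etale motivic cohomology of a separably closed field in weights $\le 4$ (Nesterenko--Suslin--Totaro, vanishing above the weight, and $H^1(F_{sep},\Z(2))\cong K_3(F_{sep})_{ind}$ from \cite{kahn1996applications}). Your bookkeeping of the surviving bidegrees, including the two terms $E_2^{4,1}$ and $E_2^{3,2}$ in total degree $5$ and the direction of the resulting extension, matches the stated short exact sequence, so the proposal is correct and takes essentially the same route.
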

	\begin{proof}
		The proof is similar to the proof of the corollary above.
	\end{proof}

	\subsection{Cohomological invariants of tori}
	
	We prove here the main result describing cohomological invariants of degree $4$ and $5$. 
	We use the notation defined above. 
	Let $A^p(-, K^M_n)$ be the $p$th homology group of complex of cycle modules \cite{rost1996chow}.
	For any $0\leq p\leq n$, from \cite[Lemma 2]{edidin1994characteristic} and Künneth formula \cite[Proposition 3.7]{esnault1998arason}, we have:
	$$
	\begin{tikzcd}\label{cyclemodule}
		A^p(BT, K^M_n)\arrow[r,"\alpha_{p,n}"]& A^p(BT_{sep}, K^M_n)^\Gamma\\
		\mathrm{CH}^p(BT)\otimes K^M_{n-p}(F)\arrow[r]\arrow[u,"\cup"]&\left(S^p(\widehat{T})\otimes K^M_{n-p}(F_{sep})\right)^\Gamma\arrow[u,"\simeq"].
	\end{tikzcd}
	$$
	Let $I$ be the kernel of \'etale cycle map $\mathrm{CH}^3(BT) \to \bar{H}^6_\mathrm{\acute{e}t}(BT,\mathbb{Z}(3))$.
	
	\begin{thm} \label{thminv4}
	Let $T$ be a torus over a perfect field $F$. 
		Then there is a natural commutative diagram:
		\[
		\begin{tikzcd}
			& [-2.5em] \left(\widehat{T} \otimes K^M_2(F_{sep})\right)^\Gamma \arrow[d] &[-2.5em]&[-2em]  \mathrm{Inv}^3(T, \mathbb{Q/Z}(2))_{norm}\otimes F^*\arrow[lldd, "\gamma" above]\arrow[ldd, "\cup" above]&[-1.5em]   \\
			& H^2(F,\widehat{T}\otimes  \mathbb{Q/Z}(2)) \arrow[d]& &0\arrow[d]&\\
			A^2(BT, K_3^M) \arrow[r,hook]&\bar{H}_\mathrm{\acute{e}t}^5(BT, \Z(3))\arrow[d]\arrow[r]&\mathrm{Inv}^4(T, \mathbb{Q/Z}(3))_{norm}\arrow[r]&I \arrow[r]\arrow[d]& 0\\
			&\left(S^2(\widehat{T})\otimes F_{sep}^*\right)^\Gamma\arrow[dr]& &\mathrm{CH}^3(BT)_{tors} \arrow[d]&  \\
			&&H^3(F,\widehat{T}\otimes  \mathbb{Q/Z}(2))\arrow[r]&\ker\left(\bar{H}_\mathrm{\acute{e}t}^6(BT,\Z(3)) \to S^3(\widehat{T})^\Gamma\right) \arrow[r] &H^1(F,S^2(\widehat{T})\otimes F^*_{sep})
		\end{tikzcd}
		\]
		 where the row, the column and the column that turns to a line are exact, and where $\gamma$ is induced by the map $\gamma_0$ in the Introduction.
		This conclusion holds for $\mathrm{char}\ F=0$. 
		If $\mathrm{char}\ F=p>0$, then the conclusion still holds after changing $\mathbb{Q/Z}(2)$ in the Galois cohomology groups of $F$ to $\mathbb{Q/Z}(2)[1/p]$.
	\end{thm}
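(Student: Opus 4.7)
The plan is to follow the approach of Blinstein and Merkurjev in \cite{m2013} (used there to establish Lemma \ref{invariants3} for degree $3$ invariants), adapted to degree $4$ and combined with the computations of Corollary \ref{keycorollary}.

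The starting point is the isomorphism provided by Lemma \ref{invariants}:
\[\mathrm{Inv}^4(T, \mathbb{Q/Z}(3))_{norm} \simeq \bar{H}^0_{\mathrm{Zar}}(BT, \mathcal{H}^4(\mathbb{Q/Z}(3)))_{bal} \, .\]
By the Beilinson--Lichtenbaum theorem, this identifies with the balanced subgroup of $\bar{H}^0_{\mathrm{Zar}}(BT, \mathcal{H}^5(\Z(3)))$, where $\mathcal{H}^q(\Z(3))$ is the Zariski sheafification of the \'{e}tale presheaf $U \mapsto H^q_\mathrm{\acute{e}t}(U, \Z(3))$. Applying the Bloch--Ogus (coniveau) spectral sequence
\[E_2^{p,q} = H^p_{\mathrm{Zar}}(BT, \mathcal{H}^q(\Z(3))) \Longrightarrow \bar{H}^{p+q}_\mathrm{\acute{e}t}(BT, \Z(3))\, ,\]
together with the identification $\mathcal{H}^3(\Z(3)) = \mathcal{K}_3^M$ (Nesterenko--Suslin--Totaro) and the Rost identification $A^p(BT, K_3^M) \simeq H^p_{\mathrm{Zar}}(BT, \mathcal{K}_3^M)$, one extracts the low-degree exact sequence
\[0 \to A^2(BT, K_3^M) \to \bar{H}^5_\mathrm{\acute{e}t}(BT, \Z(3)) \to \bar{H}^0_{\mathrm{Zar}}(BT, \mathcal{H}^5(\Z(3))) \to \mathrm{CH}^3(BT) \xrightarrow{cl} \bar{H}^6_\mathrm{\acute{e}t}(BT, \Z(3))\, ,\]
using $H^3_{\mathrm{Zar}}(BT, \mathcal{K}_3^M) = \mathrm{CH}^3(BT)$ and the fact that the last arrow is the \'{e}tale cycle class map. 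Restricting to balanced subgroups --- and checking, by functoriality of the spectral sequence with respect to the projections $p_i : E^2/G \to X$, that $A^2(BT, K_3^M)$ lands in $\bar{H}^5_{bal}$ and that balanced classes in $\bar{H}^0(BT, \mathcal{H}^5(\Z(3)))$ surject onto $I := \ker(cl)$ --- produces the exact horizontal row of the theorem.

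The column through $\bar{H}^5_\mathrm{\acute{e}t}(BT, \Z(3))$, together with its tail turning into the bottom row via $\ker(\bar{H}^6 \to S^3(\widehat{T})^\Gamma)$ and ending at $H^1(F, S^2(\widehat{T}) \otimes F_{sep}^*)$, is precisely the exact sequence of Corollary \ref{keycorollary}. For the column through $I$: the inclusion $I \hookrightarrow \mathrm{CH}^3(BT)_{tors}$ comes from the fact that over $F_{sep}$ the cycle class map $\mathrm{CH}^3(BT_{sep}) \xrightarrow{\sim} \bar{H}^6_\mathrm{\acute{e}t}(BT_{sep}, \Z(3)) = S^3(\widehat{T})$ is an isomorphism onto a torsion-free group, so classes in the kernel of the cycle map over $F$ die after base change to $F_{sep}$ and are therefore torsion (the kernel of $\mathrm{CH}^3(BT) \to \mathrm{CH}^3(BT_{sep})^\Gamma$ being torsion by a standard norm argument). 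The next arrow $\mathrm{CH}^3(BT)_{tors} \to \ker(\bar{H}^6 \to S^3(\widehat{T})^\Gamma)$ is the cycle map on torsion (automatically landing in the kernel, as torsion maps to zero in the torsion-free $S^3(\widehat{T})$), and exactness at $\mathrm{CH}^3(BT)_{tors}$ is then tautological from the definition of $I$. Exactness at $\ker(\bar{H}^6 \to S^3(\widehat{T})^\Gamma)$ --- i.e., compatibility of the cycle map on torsion with the Hochschild--Serre edge map $H^3(F, \widehat{T} \otimes \mathbb{Q/Z}(2)) \to \ker$ from Corollary \ref{keycorollary} --- will follow from comparing the Bloch--Ogus and Hochschild--Serre spectral sequences both converging to $\bar{H}^6_\mathrm{\acute{e}t}(BT, \Z(3))$.

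Finally, the diagonal map $\gamma : \mathrm{Inv}^3(T, \mathbb{Q/Z}(2))_{norm} \otimes F^* \to \bar{H}^5_\mathrm{\acute{e}t}(BT, \Z(3))$ is defined by composing a lift of the Blinstein--Merkurjev isomorphism $\gamma_0 : \bar{H}^4(BT, \Z(2))_{bal}/\mathrm{CH}^2(BT) \xrightarrow{\sim} \mathrm{Inv}^3(T, \mathbb{Q/Z}(2))_{norm}$ with the cup-product pairing $\bar{H}^4(BT, \Z(2)) \otimes H^1_\mathrm{\acute{e}t}(F, \Z(1)) \to \bar{H}^5_\mathrm{\acute{e}t}(BT, \Z(3))$, using $F^* \simeq H^1_\mathrm{\acute{e}t}(F, \Z(1))$. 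Commutativity of the two triangles formed by $\gamma$, $\cup$, and the projection $\bar{H}^5 \to \mathrm{Inv}^4$ is a multiplicativity statement for the Bloch--Ogus/Merkurjev--Suslin formalism. The principal obstacle is this final piece of multiplicative bookkeeping --- ensuring that $\gamma$ is well defined modulo the indeterminacy of $\gamma_0$ and that it is compatible with cup-products on invariants --- combined with the delicate exactness at $\ker(\bar{H}^6 \to S^3(\widehat{T})^\Gamma)$ in the column through $I$, which requires the careful comparison of the two spectral sequences mentioned above.
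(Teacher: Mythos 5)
Your overall strategy coincides with the paper's: the coniveau exact sequence in weight $3$, passage to balanced classes via Lemma \ref{invariants}, the column coming from Corollary \ref{keycorollary}, the inclusion $I\subseteq \mathrm{CH}^3(BT)_{tors}$ via restriction--corestriction and the torsion-freeness of $\mathrm{CH}^3(BT_{sep})\simeq S^3(\widehat{T})$, and the triangle via cup-product with $H^1_\mathrm{\acute{e}t}(F,\Z(1))\simeq F^*$. There is, however, one genuine gap: the step where you pass from the coniveau exact sequence to the exact row involving $\mathrm{Inv}^4(T,\mathbb{Q/Z}(3))_{norm}$ and the \emph{full} group $\bar{H}^5_\mathrm{\acute{e}t}(BT,\Z(3))$ cannot be justified by ``functoriality of the spectral sequence with respect to the projections $p_i$'' alone. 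Restricting an exact sequence to balanced subobjects is not exact in general: you must show (i) that the map $\bar{H}^5_\mathrm{\acute{e}t}(BT,\Z(3))\to \bar{H}^0_{Zar}(BT,\mathcal{H}^4(\mathbb{Q/Z}(3)))$ lands in the balanced part, and (ii) that every element of $I$ lifts to a \emph{balanced} class of $\bar{H}^0_{Zar}(BT,\mathcal{H}^4(\mathbb{Q/Z}(3)))$. The paper obtains both by writing the coniveau sequence for each $U^i/T$, viewing the result as an exact sequence of cosimplicial groups, and applying \cite[Lemma A.2]{m2013}; this works precisely because the cosimplicial groups $A^2(U^\bullet/T,K^M_3)$ and $\mathrm{CH}^3(U^\bullet/T)$ are constant (homotopy invariance of Rost's cycle modules) \emph{and} because $\bar{H}^5_\mathrm{\acute{e}t}(U^i/T,\Z(3))$ is independent of $i$ --- which is exactly the content of Corollary \ref{keycorollary}. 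So the main motivic computation of the paper is an essential input to the exactness of the row, not only to the column, and your sketch omits it at the one point where it is needed.

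Two smaller remarks. First, your concern about ``exactness at $\ker\left(\bar{H}^6_\mathrm{\acute{e}t}(BT,\Z(3))\to S^3(\widehat{T})^\Gamma\right)$'' addresses a claim the theorem does not make: the column through $I$ is only asserted to be exact at $I$ and at $\mathrm{CH}^3(BT)_{tors}$, where it follows formally from the definition of $I$ together with the commutative square comparing the cycle map with $\mathrm{CH}^3(BT)\to \mathrm{CH}^3(BT_{sep})^\Gamma\simeq S^3(\widehat{T})^\Gamma$; no comparison of the Bloch--Ogus and Hochschild--Serre spectral sequences is required. Second, for the well-definedness of $\gamma$ modulo $\mathrm{CH}^2(BT)$ you should make explicit, as the paper does, that $H^4_{Zar}(BT,\Z(2))\xrightarrow{\sim}\mathrm{CH}^2(BT)$ and $H^5_{Zar}(BT,\Z(2))=0$, so that cup-products with classes from $\mathrm{CH}^2(BT)$ die in $\bar{H}^0_{Zar}(BT,\mathcal{H}^4(\mathbb{Q/Z}(3)))$ and the triangle with vertex $\mathrm{Inv}^4(T,\mathbb{Q/Z}(3))_{norm}$ commutes.
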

	\begin{proof}
	    First, the left column of the diagram continuing as the last line in the statement of the theorem is well defined and is exact by Corollary \ref{keycorollary}.
	
		Second, for any smooth variety $X$ over a field $F$, the coniveau spectral sequence gives an exact sequence:
		\begin{align*}
			0\longrightarrow A^2(X, K^M_3)\xrightarrow{f_1} \bar{H}^5_\mathrm{\acute{e}t}(X,\mathbb{Z}(3)) \longrightarrow &\bar{H}_{Zar}^0(X, \mathcal{H}^4(\mathbb{Q/Z}(3)))\\
			\longrightarrow &\mathrm{CH}^3(X) \longrightarrow \bar{H}^6_\mathrm{\acute{e}t}(X,\mathbb{Z}(3)).
		\end{align*}
		Applying this sequence for the classifying $T$-torsor $U^i\to U^i/T$ for every $i> 0$, we obtain an exact sequence
		\begin{align*}
			0\longrightarrow A^2(U^i/T, K^M_3)\longrightarrow \bar{H}^5_\mathrm{\acute{e}t}(U^i/T,\mathbb{Z}(3)) \longrightarrow &\bar{H}_{Zar}^0(U^i/T, \mathcal{H}^4(\mathbb{Q/Z}(3)))\\
			\longrightarrow &\mathrm{CH}^3(U^i/T) \longrightarrow \bar{H}^6_\mathrm{\acute{e}t}(U^i/T,\mathbb{Z}(3)).
		\end{align*}
		Those sequences for all $i$ give an exact sequence of cosimplicial groups (see \cite[A-IV]{m2013}):
		\begin{align*}
			0\longrightarrow A^2(U^\bullet/T, K^M_3)\longrightarrow \bar{H}^5_\mathrm{\acute{e}t}(U^\bullet/T,\mathbb{Z}(3))  \longrightarrow &\bar{H}_{Zar}^0(U^\bullet/T, \mathcal{H}^4(\mathbb{Q/Z}(3)))\\
			\longrightarrow &\mathrm{CH}^3(U^\bullet/T) \longrightarrow \bar{H}^6_\mathrm{\acute{e}t}(U^\bullet/T,\mathbb{Z}(3)).
		\end{align*}
		Since $A^i(-, K^M_j)$ is homotopy invariant \cite[Proposition 8.6]{rost1996chow}, the first and fourth cosimplicial groups in the above sequence are constant \cite[Lemma A.4]{m2013}.
		By Corollary \ref{keycorollary}, each groups $\bar{H}^5_\mathrm{\acute{e}t}(U^i/T,\mathbb{Z}(3))$ only depend on $T$, hence $\bar{H}^5_\mathrm{\acute{e}t}(U^\bullet/T,\mathbb{Z}(3))$ is also constant cosimplicial group.
		Therefore \cite[Lemma A.2]{m2013} and Lemma \ref{invariants} provide an exact sequence:
		\begin{align} \label{middle-exact-seq}
			0 \longrightarrow A^2(BT, K^M_3)\longrightarrow \bar{H}^5_\mathrm{\acute{e}t}(BT,\mathbb{Z}(3)) \longrightarrow &\mathrm{Inv}^4(T, \mathbb{Q/Z}(3))_{norm} \longrightarrow &\mathrm{CH}^3(BT) \longrightarrow \bar{H}^6_\mathrm{\acute{e}t}(U/T,\mathbb{Z}(3)).
		\end{align}
		
		The middle line of the diagram in the Theorem comes from the exact sequence \eqref{middle-exact-seq}, from the vanishing of the invariant group over $F_{sep}$, and from the following commutative diagram:
		$$
		\begin{tikzcd}
			\mathrm{CH}^3(BT)\arrow[d]\arrow[r]&\bar{H}^6_\mathrm{\acute{e}t}(U/T,\mathbb{Z}(3)) \arrow[d]\\
			\mathrm{CH}^3(BT_{sep})^\Gamma\arrow[r,"\cong"]&S^3(\widehat{T})^\Gamma \, .
		\end{tikzcd}
		$$
		Since $\ker(\mathrm{CH}^3(BT)\to \mathrm{CH}^3(BT_{sep}))\cong\mathrm{CH}^3(BT)_{tor}$ by a restriction and corestriction argument, we get the exactness of the whole middle line in the Theorem.
		
		We now prove that the triangle involving the group $\mathrm{Inv}^3(T, \mathbb{Q/Z}(2))_{norm}$ is commutative.
		The following diagram is commutative by definition (the first line is well-defined since $H^4(BT, \Z(2)) \xrightarrow{\sim} \mathrm{CH}^2(BT)$ and $H^5(BT, \Z(2))=0$ for the Zariski topology):
		$$
		\begin{tikzcd}
			\left(\bar{H}^4_\mathrm{\acute{e}t}(X,\mathbb{Z}(2))_{bal}/\mathrm{CH}^2(BT)\right)\otimes H^1_\mathrm{\acute{e}t}(F,\Z(1)) \arrow[d,"\cong"]\arrow[r,"\cup"]&\bar{H}^5_\mathrm{\acute{e}t}(X,\mathbb{Z}(3)) \arrow[d]\\
			\bar{H}^0(X, \mathcal{H}^3(\mathbb{Q/Z}(2)))_{bal}\otimes F^*\arrow[d,"\cong"]\arrow[r,"\cup"]&\bar{H}^0(X, \mathcal{H}^4(\mathbb{Q/Z}(3)))_{bal} \arrow[d,"\cong"]\\
			\mathrm{Inv}^3(T, \mathbb{Q/Z}(2))_{norm}\otimes F^*\arrow[r,"\cup"]&\mathrm{Inv}^4(T, \mathbb{Q/Z}(3))_{norm}.
		\end{tikzcd}
		$$
		Therefore, we get the required commutativity by inverting the isomorphisms.
	\end{proof}
	
	
Similarly,	for degree $5$ cohomological invariants, we can also deduce from the coniveau spectral sequence an exact sequence:
	\begin{align*}
		0\longrightarrow A^2(X, K^M_4)\xrightarrow{f_2} \bar{H}^6_\mathrm{\acute{e}t}(X,\mathbb{Z}(4)) \longrightarrow &\bar{H}_{Zar}^0(X, \mathcal{H}^5(\mathbb{Q/Z}(4)))\\
		\longrightarrow & A^3(X, K^M_4) \longrightarrow \bar{H}^7_\mathrm{\acute{e}t}(X,\mathbb{Z}(4)).
	\end{align*}
	Combining this exact sequence and the argument in the proof above,  we obtain an exact sequence involving degree $5$ cohomological invariants:
	\[
		0 \longrightarrow A^2(BT, K^M_4) \longrightarrow \bar{H}^6_\mathrm{\acute{e}t}(BT,\mathbb{Z}(4)) \longrightarrow \mathrm{Inv}^5(T, \mathbb{Q/Z}(4))_{norm}
		\longrightarrow A^3(BT, K^M_4)  \longrightarrow \bar{H}^7_\mathrm{\acute{e}t}(BT,\mathbb{Z}(4)) \, .
	\]

    The Hochschild–Serre spectral sequence (see \cite{RS18}, (3.5)):
		$$E_2^{p,q} = H^{p}(F,\bar{H}^q_\mathrm{\acute{e}t}(BT_{sep},\mathbb{Z}(4)))\implies \bar{H}^{p+q}_\mathrm{\acute{e}t}(BT,\mathbb Z(4)) \, ,$$
    and Corollary \ref{keycorollary2} give rise to the Theorem below, taking into account that the groups $H^i(F_{sep},\Z(3))$ are uniquely divisible for $i=0$ and $i=2$ (see for instance \cite{G17}, Theorem 1.1):
    
	
	\begin{thm}\label{thminv5}
		Let $T$ be a torus over a perfect field $F$. Then there is an exact commutative diagram:
		\[
		\begin{tikzcd}
		& [-6.5em] H^3(F, \widehat{T} \otimes H^1(F_{sep},\Z(3))) \arrow[d] & [-4.5em] & [-2.5em] & [-6.5em] H^4(F,  \widehat{T} \otimes H^1(F_{sep},\Z(3))) \arrow[d] & [-6.5em]\\
		A^2(BT, K^M_4) \arrow[r,hook] & \bar{H}^6_\mathrm{\acute{e}t}(BT,\mathbb{Z}(4)) \arrow[r] \ar[d] & \mathrm{Inv}^5(T, \mathbb{Q/Z}(4))_{norm} \arrow[r] & A^3(BT,K^M_4) \arrow[r] & \bar{H}^7_\mathrm{\acute{e}t}(BT,\mathbb{Z}(4)) \arrow[d] & \\
		H^1(F, S^2(\widehat{T}) \otimes \Q/\Z(2))' \arrow[r,hook] & Q \arrow[r] \arrow[d]& (S^2(\widehat{T})\otimes K^M_2(F_{sep}))^\Gamma & H^2(F, S^2(\widehat{T}) \otimes \Q/\Z(2))' \arrow[r,hook] & R \arrow[r] \arrow[d] & \left(S^3(\widehat{T})\otimes F_{sep}^*\right)^\Gamma \\
		&0& & &0& \\
		\end{tikzcd}
		\]
		where $H^1(F, S^2(\widehat{T}) \otimes \Q/\Z(2))' := \ker\left(H^1(F, S^2(\widehat{T}) \otimes \Q/\Z(2)) \to H^4(F, \widehat{T} \otimes H^1(F_{sep},\Z(3)))\right)$ and
		$H^2(F, S^2(\widehat{T}) \otimes \Q/\Z(2))' := \ker\left(H^2(F, S^2(\widehat{T}) \otimes \Q/\Z(2))  \to H^5(F, \widehat{T} \otimes H^1(F_{sep},\Z(3)))\right)/ (S^2(\widehat{T})\otimes K^M_2(F_{sep}))^\Gamma$.
	\end{thm}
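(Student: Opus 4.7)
The plan is to mirror exactly the two-step strategy used for Theorem \ref{thminv4}, replacing weight $3$ by weight $4$. First I would obtain the middle horizontal row. Starting from the five-term sequence displayed just before the statement, one applies it to the classifying approximation $U^i/T$ for each $i$, producing a cosimplicial sequence all five members of which are constant as $i$ varies: homotopy invariance of Rost's cycle modules \cite[Proposition 8.6]{rost1996chow} combined with \cite[Lemma A.4]{m2013} handles $A^2(-, K^M_4)$ and $A^3(-, K^M_4)$, while Corollary \ref{keycorollary2} (showing that $\bar{H}^i_\mathrm{\acute{e}t}(U^i/T,\Z(4))$ depends only on $T$ via $\widehat{T}$ for $i=6,7$) handles the second and fifth. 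Then Lemma \ref{invariants} together with \cite[Lemma A.2]{m2013} identifies the middle cosimplicial term with $\mathrm{Inv}^5(T, \Q/\Z(4))_{norm}$, producing the central row.

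Second, for the two vertical columns I would feed Corollary \ref{keycorollary2} into the Hochschild--Serre spectral sequence $E_2^{p,q} = H^p(F, \bar{H}^q_\mathrm{\acute{e}t}(BT_{sep}, \Z(4))) \Rightarrow \bar{H}^{p+q}_\mathrm{\acute{e}t}(BT, \Z(4))$. Unique divisibility of $H^0(F_{sep}, \Z(3))$ and $H^2(F_{sep}, \Z(3))$ makes the $E_2$ terms with $q=2$ or $q=4$ vanish in positive horizontal degree, so that on the $p+q=6$ and $p+q=7$ diagonals only the contributions from $q=3, 5, 6$ (and $q=7$ for $\bar{H}^7$) survive. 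For the $q=5$ contribution I would use the short exact sequence $0 \to \widehat{T} \otimes K^M_3(F_{sep}) \to \bar{H}^5_\mathrm{\acute{e}t}(BT_{sep}, \Z(4)) \to S^2(\widehat{T}) \otimes K_3(F_{sep})_{ind} \to 0$ from Corollary \ref{keycorollary2}, combined with the divisibility of $K_3(F_{sep})_{ind}$ with torsion $\Q/\Z(2)$, to identify $H^i(F, S^2(\widehat{T}) \otimes K_3(F_{sep})_{ind}) \cong H^i(F, S^2(\widehat{T}) \otimes \Q/\Z(2))$ in positive degrees. The resulting Hochschild--Serre filtrations on $\bar{H}^6_\mathrm{\acute{e}t}(BT, \Z(4))$ and $\bar{H}^7_\mathrm{\acute{e}t}(BT, \Z(4))$ have bottoms coming from $H^3$, resp.\ $H^4$, of $\widehat{T} \otimes H^1(F_{sep}, \Z(3))$, providing the two leftmost vertical arrows; the quotients $Q$ and $R$ then pick up the advertised extension structures, with $H^1(F, S^2(\widehat{T}) \otimes \Q/\Z(2))'$ and $H^2(F, S^2(\widehat{T}) \otimes \Q/\Z(2))'$ arising exactly as the subquotients that survive the spectral sequence differentials landing in $H^{4}$ and $H^5$ of $\widehat{T} \otimes H^1(F_{sep}, \Z(3))$, respectively (and, for $R$, after also quotienting out by the top Galois-invariant piece $(S^2(\widehat{T})\otimes K^M_2(F_{sep}))^\Gamma$ coming from the $q=6$ row). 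The positive-characteristic version follows the same argument after replacing $\Q/\Z(2)$ by $\Q/\Z(2)[1/p]$ in every Galois cohomology group of $F$.

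The main obstacle is the careful bookkeeping of the Hochschild--Serre spectral sequence on these two diagonals: one has to simultaneously verify each vanishing produced by unique divisibility, pin down which spectral-sequence differentials can a priori be nonzero, and follow how the filtration on $\bar{H}^6$ and $\bar{H}^7$ interacts with the nontrivial extension describing $\bar{H}^5_\mathrm{\acute{e}t}(BT_{sep}, \Z(4))$. Once this analysis is carried out, the commutative diagram should assemble directly from the middle row established in the first paragraph, the two column filtrations of the second, and the naturality of the various edge maps.
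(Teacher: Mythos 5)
Your proposal follows essentially the same route as the paper: the middle row is obtained by running the weight-$4$ coniveau five-term sequence through the cosimplicial/balanced-elements argument of Theorem \ref{thminv4} (using homotopy invariance of cycle modules and Corollary \ref{keycorollary2} to see the terms are constant, then Lemma \ref{invariants}), and the two columns come from the Hochschild--Serre spectral sequence fed with Corollary \ref{keycorollary2}, the unique divisibility of $H^0(F_{sep},\Z(3))$, $H^2(F_{sep},\Z(3))$ and $K^M_3(F_{sep})$, and the identification $H^i(F,S^2(\widehat{T})\otimes K_3(F_{sep})_{ind})\cong H^i(F,S^2(\widehat{T})\otimes\Q/\Z(2))$ for $i>0$. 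Your bookkeeping of which $E_2$-terms survive on the $p+q=6,7$ diagonals and of the differentials defining $H^1(F,S^2(\widehat{T})\otimes\Q/\Z(2))'$ and $H^2(F,S^2(\widehat{T})\otimes\Q/\Z(2))'$ agrees with the statement, so the argument is correct and coincides with the paper's.
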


	\subsection{Unramified cohomology}
	
	Given a field extension $L/F$, the $i$-th unramified cohomology group of $L/F$ is defined as the group
	$$
	H_{nr}^i(L/F,\Q/\Z(j))):=\bigcap_{A\in P(L)}\mathrm{im}\ (H^i(A,\Q/\Z(j))\longrightarrow H^{i}(L,\Q/\Z(j))) \, ,
	$$
	where $P(L)$ is the set of all rank one discrete valuation rings which contain $F$ and have quotient field $L$.
	They can also be defined by the intersection of the kernel of the residue maps $\partial_A$, for $A\in P(L)$ when $\mathrm{char}(F)=0$ \cite{ct1995}.
	If $X$ is a smooth integral variety over $F$, the $i$-th unramified cohomology group of $X$ is defined as $H_{nr}^i(F(X)/F,\Q/\Z(j))$.

	\begin{cor}\label{unramified}
		Let $S$ be a torus over a perfect field $F$, and let $$1\longrightarrow T\longrightarrow P\longrightarrow S\longrightarrow 1$$ be a flasque resolution of $S$.
		Then the following commutative diagrams are exact:
		\[
		\begin{tikzcd}
			& [-2.5em] \left(\widehat{T} \otimes K^M_2(F_{sep})\right)^\Gamma \arrow[d] &[-2.5em]&[-2em]  \bar{H}^3_{nr}(F(S), \mathbb{Q/Z}(2))\otimes F^*\arrow[lldd, "\gamma" above]\arrow[ldd, "\cup" above]&[-1.5em]   \\
			& H^2(F,\widehat{T}\otimes  \mathbb{Q/Z}(2)) \arrow[d]& &0\arrow[d]&\\
			A^2(BT, K_3^M) \arrow[r,hook]&\bar{H}_\mathrm{\acute{e}t}^5(BT, \Z(3))\arrow[d]\arrow[r]&\bar{H}^4_{nr}(F(S), \mathbb{Q/Z}(3))\arrow[r]&I \arrow[r]\arrow[d]& 0\\
			&\left(S^2(\widehat{T})\otimes F_{sep}^*\right)^\Gamma\arrow[dr]& &\mathrm{CH}^3(BT)_{tors} \arrow[d]&  \\
			&&H^3(F,\widehat{T}\otimes  \mathbb{Q/Z}(2))\arrow[r]&\ker\left(\bar{H}_\mathrm{\acute{e}t}^6(BT,\Z(3)) \to S^3(\widehat{T})^\Gamma\right) \arrow[r] &H^1(F,S^2(\widehat{T})\otimes F^*_{sep})
		\end{tikzcd}
		\]
		and
        \[
        \begin{tikzcd}
        	& [-6.5em] H^3(F, \widehat{T} \otimes H^1(F_{sep},\Z(3))) \arrow[d] & [-4.5em] & [-2.5em] & [-6.5em] H^4(F,  \widehat{T} \otimes H^1(F_{sep},\Z(3))) \arrow[d] & [-6.5em]\\
        	A^2(BT, K^M_4) \arrow[r,hook] & \bar{H}^6_\mathrm{\acute{e}t}(BT,\mathbb{Z}(4)) \arrow[r] \ar[d] & \bar{H}^5_{nr}(F(S), \mathbb{Q/Z}(4)) \arrow[r] & A^3(BT,K^M_4) \arrow[r] & \bar{H}^7_\mathrm{\acute{e}t}(BT,\mathbb{Z}(4)) \arrow[d] & \\
        	H^1(F, S^2(\widehat{T}) \otimes \Q/\Z(2))' \arrow[r,hook] & Q \arrow[r] \arrow[d]& (S^2(\widehat{T})\otimes K^M_2(F_{sep}))^\Gamma & H^2(F, S^2(\widehat{T}) \otimes \Q/\Z(2))' \arrow[r,hook] & R \arrow[r] \arrow[d] & \left(S^3(\widehat{T})\otimes F_{sep}^*\right)^\Gamma \\
        	&0& & &0& \\
        \end{tikzcd}
        \]
		where $H^1(F, S^2\widehat{T} \otimes \Q/\Z(2))' := \ker\left(H^1(F, S^2\widehat{T} \otimes \Q/\Z(2)) \to H^4(F, \widehat{T} \otimes H^1(F_{sep},\Z(3)))\right)$ and
		$H^2(F, S^2\widehat{T} \otimes \Q/\Z(2))' := \ker\left(H^2(F, S^2\widehat{T} \otimes \Q/\Z(2)) / (S^2(\widehat{T})\otimes K^M_2(F_{sep}))^\Gamma \to H^5(F, \widehat{T} \otimes H^1(F_{sep},\Z(3)))\right)$.
		
		This conclusion holds for $\mathrm{char}\ F=0$. 
		If $\mathrm{char}\ F=p>0$, then the conclusion still holds after changing $\mathbb{Q/Z}(2)$ in the Galois cohomology groups of $F$ to $\mathbb{Q/Z}(2)[1/p]$.
	\end{cor}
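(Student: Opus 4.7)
The plan is to deduce this corollary directly from Theorems \ref{thminv4} and \ref{thminv5} applied to the torus $T$ appearing in the flasque resolution, by translating normalized cohomological invariants of $T$ into unramified cohomology of $S$ via the Blinstein--Merkurjev formalism.

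The first step is to invoke the isomorphism (established in \cite{m2013}) that, for a flasque resolution $1 \to T \to P \to S \to 1$ of an arbitrary $F$-torus $S$, induces a natural isomorphism
\[
\mathrm{Inv}^{i}(T,\mathbb{Q/Z}(j))_{norm} \;\xrightarrow{\sim}\; \bar{H}^{i}_{nr}(F(S),\mathbb{Q/Z}(j))
\]
for the relevant pairs $(i,j)$, here $(4,3)$ and $(5,4)$. This identification rests on the fact that $T$ is flasque (hence a direct summand of a quasi-trivial torus up to $R$-equivalence), so that $S \simeq P/T$ is stably rational over $F$ and every invariant of $T$ is unramified in a canonical way; conversely, the Rost-style construction of \cite{m2013} shows that unramified classes over $F(S)$ descend to normalized invariants of $T$.

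Once this translation is in hand, the first diagram is obtained by taking the diagram of Theorem \ref{thminv4} for the torus $T$ of the flasque resolution and substituting $\bar{H}^{4}_{nr}(F(S),\mathbb{Q/Z}(3))$ for $\mathrm{Inv}^{4}(T,\mathbb{Q/Z}(3))_{norm}$; similarly, the second diagram follows from Theorem \ref{thminv5} after substituting $\bar{H}^{5}_{nr}(F(S),\mathbb{Q/Z}(4))$ for $\mathrm{Inv}^{5}(T,\mathbb{Q/Z}(4))_{norm}$. All the other groups and maps in the diagrams depend only on $T$ (through $\widehat{T}$ and the motivic cohomology of $BT$), so they are unchanged. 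The characteristic $p$ statement follows by the same substitution, already built into the statements of Theorems \ref{thminv4} and \ref{thminv5}.

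The only point that requires care is the degree $5$ comparison, since \cite{m2013} explicitly treats only the degree $3$ case; the extension to higher degrees for a flasque torus $T$ follows, however, from the general principle (see \cite[Section 5]{m2013}) that for a smooth variety with trivial $\mathrm{Pic}$ and trivial higher unramified cohomology (both satisfied by the classifying space of a quasi-trivial torus $P$), the cohomological invariants of the kernel $T$ compute the unramified cohomology of the quotient $S$. This is the main point to verify, but it is a formal consequence of the Hochschild--Serre spectral sequence for the $T$-torsor $P \to S$ together with the vanishing of low-degree unramified cohomology of $P$, both of which are standard.
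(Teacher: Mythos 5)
Your overall route is exactly the paper's: its proof of this corollary is a one-line citation of Theorem 5.7 of \cite{m2013} together with Theorems \ref{thminv4} and \ref{thminv5}, and that theorem of Blinstein--Merkurjev is stated for invariants of arbitrary degree $n$, so the special care you devote to the degree $5$ case is unnecessary. However, the justification you sketch for the identification $\mathrm{Inv}^{i}(T,\mathbb{Q/Z}(j))_{norm} \simeq \bar{H}^{i}_{nr}(F(S),\mathbb{Q/Z}(j))$ is wrong as stated: a flasque torus need not be a direct summand of a quasi-trivial torus (that is the stronger ``invertible'' condition), and $S$ is certainly not stably rational in general --- every torus admits a flasque resolution, and if $S$ were stably rational then $\bar{H}^{i}_{nr}(F(S),\mathbb{Q/Z}(j))$ would vanish and the corollary would be vacuous. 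The actual mechanism in \cite{m2013} is that $P \to S$ is a classifying $T$-torsor with $P$ a rational variety, so normalized invariants of $T$ are identified with \emph{balanced} unramified classes on $S$ via Lemma \ref{invariants}, and the flasqueness of $T$ is what guarantees that every unramified class is balanced. Since you are ultimately only citing the result, this does not invalidate the deduction, but the heuristic you offer in its place should be discarded.
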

	\begin{proof}
		It is a consequence of Theorem 5.7 in \cite{m2013} and of Theorems \ref{thminv4} and \ref{thminv5}.
	\end{proof}
	
	\section{Examples}
	
	In this section, we apply the main results to compute explicitely invariant groups for some families of tori. In particular, we provide an example of torus with a non-trivial invariant of degree $4$, and we compute the invariant groups in the case of norm tori.
	
	\begin{exa*}
		Let us construct a torus $T$ with a non-trivial degree $4$ invariant that does not come from a lower degree invariant.
	\end{exa*}

		In \cite{sala2022chow}, Sala constructs a torus $T$ over a field $F$ such that $\mathrm{CH}^3(BT)_{tors}$ is nontrivial, as follows: consider the following exact sequence of $Q_8$-modules
		$$
		0\longrightarrow \widehat{P}\longrightarrow \widehat{Q}\longrightarrow \widehat{T} := \widehat{Q}/\widehat{P} \longrightarrow 0 \, ,
		$$
		where $\widehat{Q}$ is $\mathbb{Z}\left[ Q_8\right] $ and $\widehat{P}$ is $\mathbb{Z}\left[ Q_8/\left\lbrace 1,-1\right\rbrace \right] $.
		Here $Q_8=\left\lbrace i,j,k|i^2=j^2=k^2=ijk=-1 \right\rbrace $ is the quaternion group of order $8$.
		Let $e,e',x,x',y,y',z,z' $ denote the elements associated respectively to $1,-1,i,-i,j,-j,k,-k$ inside the character group $\widehat{Q}$.
		Then $(e+e',x+x',y+ y',z+ z')$ is a $\Z$-basis of the sublattice $\widehat{P}$. In addition, the classes of $e,x,y,z$ in $\widehat{T}$ are a $\Z$-basis of $\widehat{T}$. Then we have a decomposition as a $Q_8$-module:
		\[
		S^2(\widehat{T}) = M \oplus P_x \oplus P_y \oplus P_z \, ,
		\]
		where the submodules on the right hand side are defined by $M := \mathbb{Z} ee \oplus \Z xx \oplus \Z y y \oplus \Z z z$, $P_x := \mathbb{Z}(ex+yz) \oplus \mathbb{Z} (ex-yz)$, $P_y := \mathbb{Z}(ey+xz) \oplus \mathbb{Z} (ey-xz)$ and $P_z := \mathbb{Z}(ez+xz) \oplus \mathbb{Z} (ez-xz)$.
		It is easy to see that $M$ is a rank $4$ permutation $Q_8$-module which is isomorphic to $\mathbb{Z} \left[Q_8/\{1,-1\}\right]$.
		
		Let $N_x := \Z(ex-yz) \subset P_x$ be the rank 1 submodule generated by $ex-yz$, and let $\bar{N}_x := P_x / N_x$. Then we have an exact sequence of $Q_8$-modules
		\[0 \to N_x \to P_x \to \bar N_x \to 0\]
		and $N_x$ (resp. $\bar N_x$) is isomorphic to $\Z$ with the non-trivial action of $Q_8 / \langle j \rangle$ (resp. of $Q_8 / \langle k \rangle$).
		
		Let now $L/F$ be a Galois extension with Galois group $Q_8$. Let $K$ be the subfield fixed by $\{1,-1\}$ and $K_1$, $K_2$, $K_3$ be the subfields of $K$ with Galois groups $\{1,\bar{i}\}$, $\{1,\bar{j}\}$, $\{1,\bar{k}\}$ over $F$ respectively.
		
		By Shapiro Lemma and Hilbert 90, we have $H^1(F,M \otimes L^*) \cong H^1(Q_8, M \otimes L^*) \cong H^1(L/K, L^*)=0$, and exact sequences
		\[0 \to H^1(F, N_x \otimes L^*) \to \mathrm{Br}(F) \to \mathrm{Br}(K_2)\]
		and 
		\[0 \to H^1(F, \bar N_x \otimes L^*) \to \mathrm{Br}(F) \to \mathrm{Br}(K_3) \, .\]
		Hence, the exact sequence
		\[0 \to H^1(F, N_x \otimes L^*) \to H^1(F, P_x \otimes L^*) \to H^1(F, \bar N_x \otimes L^*) \]
		gives rise to an exact sequence 
		\[0 \to \mathrm{Br}(K_2/F) \to H^1(F, P_x \otimes L^*) \to \mathrm{Br}(K_3/F) \, . \]
		
		Using a similar argument for $P_y$ and $P_z$, we get an exact sequence:
		\[0 \to \mathrm{Br(K_1/F)} \oplus \mathrm{Br(K_2/F)} \oplus \mathrm{Br(K_3/F)} \to 
			H^1(F,S^2(\widehat{T})\otimes F^*_{sep}) \to  \mathrm{Br(K_1/F)} \oplus \mathrm{Br(K_2/F)} \oplus \mathrm{Br(K_3/F)} \, .\]
	
		If we restrict to $F$ being the maximal abelian extension of an algebraic extension of $\mathbb{Q}$, then these relative Brauer groups are trivial. 
		Hence $H^1(F,S^2(\widehat{T})\otimes F^*_{sep}) = 0$, since $\mathbb{Q}^{ab}$ has cohomological dimension $1$, therefore the non-trivial element in $\mathrm{CH}^3(BT)_{tor}$ given by Sala \cite[Theorem 5.8]{sala2022chow} defines a non-trivial degree $4$ invariant by Theorem \ref{thminv4}, and this invariant does not come from degree $3$ cohomological invariants by cup with $F^*$.
		Note that $Q_8$ is indeed a Galois group of $F$ by \cite{iwasawa1953solvable}.
	
	In particular, we proved the following:
	\begin{prop}
	There exists an explicit field $F$ of characteristic zero and an explicit $4$-dimensional $F$-torus $T$ with a non-trivial degree $4$ invariant that does not come from a degree $3$ invariant by cup-product with $F^*$.
	\end{prop}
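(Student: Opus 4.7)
The plan is to take the $4$-dimensional torus $T$ constructed above — via the exact sequence of $Q_8$-modules $0 \to \widehat P \to \widehat Q \to \widehat T \to 0$ with $\widehat Q = \Z[Q_8]$ and $\widehat P = \Z[Q_8/\{1,-1\}]$ — and apply Theorem \ref{thminv4} over a base field $F$ chosen so that the relevant obstruction groups vanish. By Sala \cite[Theorem 5.8]{sala2022chow}, $\mathrm{CH}^3(BT)_{tors}$ is non-trivial, and the surjection $\mathrm{Inv}^4(T, \mathbb{Q/Z}(3))_{norm} \twoheadrightarrow I$ coming from the middle row of Theorem \ref{thminv4} would translate a non-trivial class in $I$ into a non-trivial degree $4$ invariant. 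So it suffices to arrange that Sala's class lies in the kernel of the cycle map $\mathrm{CH}^3(BT)_{tors} \to \bar{H}^6_\mathrm{\acute{e}t}(BT, \Z(3))$, equivalently that $I = \mathrm{CH}^3(BT)_{tors}$.

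The key computation is to kill both $H^1(F, S^2(\widehat T) \otimes F_{sep}^*)$ and $H^3(F, \widehat T \otimes \mathbb{Q/Z}(2))$, which by Corollary \ref{keycorollary} jointly annihilate $\ker(\bar{H}^6_\mathrm{\acute{e}t}(BT, \Z(3)) \to S^3(\widehat T)^\Gamma)$. I would first decompose $S^2(\widehat T)$ as the $Q_8$-module $M \oplus P_x \oplus P_y \oplus P_z$, where $M \simeq \Z[Q_8/\{1,-1\}]$ is permutation and each $P_\bullet$ fits into a short exact sequence $0 \to N_\bullet \to P_\bullet \to \bar N_\bullet \to 0$ whose two ends are rank-one modules on which $Q_8$ acts through a quotient of order two. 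Shapiro's lemma and Hilbert 90 then identify $H^1(F, N_\bullet \otimes L^*)$ and $H^1(F, \bar N_\bullet \otimes L^*)$ with relative Brauer groups $\mathrm{Br}(K_j/F)$, where $K_1, K_2, K_3$ are the three quadratic subfields of a $Q_8$-Galois extension $L/F$, while $M$ contributes nothing. Choosing $F$ to be the maximal abelian extension of an algebraic extension of $\Q$ — which has cohomological dimension one and admits a $Q_8$-Galois extension by \cite{iwasawa1953solvable} — simultaneously kills these Brauer groups and the group $H^3(F, \widehat T \otimes \mathbb{Q/Z}(2))$.

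Over such $F$ one has $\ker(\bar{H}^6_\mathrm{\acute{e}t}(BT, \Z(3)) \to S^3(\widehat T)^\Gamma) = 0$, so $I = \mathrm{CH}^3(BT)_{tors}$ and Sala's class lifts to a non-trivial element of $\mathrm{Inv}^4(T, \mathbb{Q/Z}(3))_{norm}$. To see that no such lift is a cup-product of a degree $3$ invariant with $F^*$, I would invoke the commutative triangle in Theorem \ref{thminv4}: the cup-product factors through $\bar{H}^5_\mathrm{\acute{e}t}(BT, \Z(3)) \to \mathrm{Inv}^4(T, \mathbb{Q/Z}(3))_{norm}$, whose image equals $\ker(\mathrm{Inv}^4 \to I)$ by exactness of the middle row; since our invariant has non-zero image in $I$, it cannot lie in this image. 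The main obstacle is the Brauer-group computation above — one has to decompose $S^2(\widehat T)$ as $Q_8$-module into pieces each of which yields an accessible relative Brauer group, after which the cohomological dimension hypothesis on $F$ closes the argument.
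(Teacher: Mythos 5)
Your proposal is correct and follows essentially the same route as the paper: the same decomposition $S^2(\widehat{T}) = M \oplus P_x \oplus P_y \oplus P_z$, the same Shapiro/Hilbert~90 reduction to relative Brauer groups over the quadratic subfields, the same choice of $F$ as a maximal abelian extension of an algebraic extension of $\Q$ (with Iwasawa providing the $Q_8$-extension), and the same use of the exact middle row of Theorem \ref{thminv4} to lift Sala's class and to rule out cup-products. If anything, you are slightly more explicit than the paper in noting that the vanishing of $H^3(F,\widehat{T}\otimes\mathbb{Q/Z}(2))$ (from $\mathrm{cd}(F)\le 1$) is also needed to conclude $I=\mathrm{CH}^3(BT)_{tors}$.
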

		
	\begin{exa*} Let us consider now a torus $T$ that fits into an exact sequence:
		$$
		1\longrightarrow T\longrightarrow P\longrightarrow \mathbb{G}_m^n\longrightarrow 1
		$$
		where $P$ is a quasi-trivial torus. In particular, norm one tori fit into such exact sequences with $n=1$.
	\end{exa*}
	
		
		
	From the construction at the beginning of section \ref{subsec 3.1}, the natural map $BT\to BP$ is a $\mathbb{G}_m^n$-torsor and $BP$ is approximated by cellular varieties.
		
		\begin{prop}\label{A-group}
			Let $T$ be a torus as above over an infinite field $F$, then $A^i(BT,K^M_j)\simeq S^i(\widehat{T})^\Gamma\otimes K^M_{j-i}(F)$.
			In particular, if $i=j$, $\mathrm{CH}^i(BT)\simeq S^i(\widehat{T})^\Gamma$.
		\end{prop}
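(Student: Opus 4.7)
The plan is to exploit the $\mathbb{G}_m^n$-torsor structure $\pi : BT \to BP$ and reduce the computation to $BP$, which is tractable because $P$ is quasi-trivial. Concretely, since $P = \prod_k R_{K_k/F}(\mathbb{G}_{m,K_k})$, one can take the approximation $Y = U/P$ to be $\prod_k R_{K_k/F}(\mathbb{P}^N_{K_k})$, a smooth projective cellular $F$-variety (Weil restriction preserves cellularity). The Künneth formula for cycle modules on cellular varieties \cite[Proposition 3.7]{esnault1998arason}, combined with the identification $\mathrm{CH}^i(Y_{sep}) \cong S^i(\widehat{P})$ of \cite[Lemma 2]{edidin1994characteristic} and Galois descent on the cellular Chow ring, then yields
\[A^i(BP, K^M_j) \cong S^i(\widehat{P})^\Gamma \otimes_{\mathbb{Z}} K^M_{j-i}(F).\]

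Next, I would factor $\pi : BT \to BP$ as a tower of $n$ successive $\mathbb{G}_m$-torsors, one for each basis element $e_1, \dots, e_n$ of $\widehat{\mathbb{G}_m^n} \subset \widehat{P}$. For each step $X' \to Y'$, viewed as the complement of the zero section in the associated line bundle $L_k$, Rost's localization sequence for cycle modules together with $\mathbb{A}^1$-homotopy invariance produces a right exact sequence
\[A^{i-1}(Y', K^M_{j-1}) \xrightarrow{\cdot c_1(L_k)} A^i(Y', K^M_j) \twoheadrightarrow A^i(X', K^M_j),\]
the residual boundary terms being controlled by the direct-sum structure inherited from the cellular base. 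Iterating over the $n$ steps gives
\[A^i(BT, K^M_j) \cong A^i(BP, K^M_j) \Big/ \sum_{k=1}^n c_1(L_k) \cdot A^{i-1}(BP, K^M_{j-1}).\]

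Finally, since $\widehat{\mathbb{G}_m^n}$ has trivial $\Gamma$-action, the classes $c_1(L_k) \in S^1(\widehat{P})^\Gamma$ are Galois-invariant and the subgroup one quotients by identifies with $\bigl(\widehat{\mathbb{G}_m^n} \cdot S^{i-1}(\widehat{P})\bigr)^\Gamma \otimes K^M_{j-i}(F)$; the exact sequence $0 \to \widehat{\mathbb{G}_m^n} \to \widehat{P} \to \widehat{T} \to 0$ provides $S^i(\widehat{T}) \cong S^i(\widehat{P})/\widehat{\mathbb{G}_m^n}\cdot S^{i-1}(\widehat{P})$ as $\Gamma$-modules, and comparing $\Gamma$-invariants concludes. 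The main obstacle will be twofold: controlling the residue terms in the iterated Gysin sequences so that the right-exact identification above genuinely holds at the level of cycle modules (not only for plain Chow groups), and verifying that the induced map $S^i(\widehat{P})^\Gamma \twoheadrightarrow S^i(\widehat{T})^\Gamma$ remains surjective; the latter should follow via the Koszul resolution of $\widehat{\mathbb{G}_m^n} \cdot S^{i-1}(\widehat{P})$ by tensor products of trivial and permutation modules, reducing the surjectivity to a vanishing of Galois cohomology provided by Shapiro's lemma.
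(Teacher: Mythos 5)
Your plan is essentially the paper's own argument: the spectral sequence the paper imports from \cite[3.10--3.12]{esnault1998arason} and \cite[Proposition 4.4]{sala2022chow} is exactly the iterated Gysin/localization d\'evissage of the $\mathbb{G}_m^n$-torsor $BT \to BP$ over the cellular base that you describe, with $E_1$-page the Koszul complex $\Lambda^{\bullet}\mathbb{Z}^n \otimes S^{\bullet}(\widehat{P})^\Gamma \otimes K^M_{\bullet}(F)$. The two ``obstacles'' you flag (killing the residual boundary terms and the surjectivity onto $S^i(\widehat{T})^\Gamma$) are precisely the degeneration of that spectral sequence at $E_2$ outside the column $p=0$, which the paper deduces from Proposition \ref{cqBT} and Lemma \ref{koszulcomplex}, i.e.\ from the exactness of the Koszul complex of (permutation) $\Gamma$-modules --- the same Koszul-plus-Shapiro mechanism you propose.
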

		\begin{proof}
			Let $X$ and $Y$ be as in the beginning of section \ref{subsec 3.1}, approximating $BT$ and $BP$ respectively. Then the map $X\to Y$ is a $\mathbb{G}_m^n$-torsor.
			We may assume that $Y$ is a cellular variety.
			Let $d$ be the dimension of $X$.
			From the arguments of \cite[3.10-3.12]{esnault1998arason} and \cite[Proposition 4.4]{sala2022chow}, we obtain a spectral sequence:
			$$
			E^1_{p,q}=\Lambda^p\mathbb Z^n \otimes S^{d-p-q}(\widehat{P})^\Gamma\otimes K^M_{j-q}(F)\implies A^{d-p-q}(X,K^M_{j}).
			$$
			Therefore, the second page is $E^2_{0,q}=S^{d-q}(\widehat{T})^\Gamma\otimes K^M_{j-q}(F)$ and $E^2_{p,q}=0$ for $p\neq 0$ because of Proposition \ref{cqBT} and Lemma \ref{koszulcomplex}.
			This proves $A^i(BT,K^M_j)\simeq S^i(\widehat{T})^\Gamma\otimes K^M_{j-i}(F)$.
		\end{proof}

		Theorefore, the group of degree $3$ normalized cohomological invariants of $T$ is isomorphic to \[\mathrm{Inv}^3(T,\mathbb{Q/Z}(2))_{norm} \simeq H^1(F,T^\circ)\] by Theorem \ref{invariants3}.  
		
		If in particular $T = R^{(1)}_{L/F}(\mathbb{G}_m)$, $P = R_{L/F}(\mathbb{G}_m)$ and $n=1$, we get 
		\[\mathrm{Inv}^3(T,\mathbb{Q/Z}(2))_{norm} \simeq H^1(F,T^\circ) \simeq \mathrm{Br}(L/F) \, .\]
		
		From Proposition \ref{A-group}, $A^2(BT, K^M_3)\simeq S^2(\widehat{T})^\Gamma \otimes F^*$ and $A^3(BT, K^M_3)\simeq S^3(\widehat{T})^\Gamma$.
		Therefore, considering the commutative diagram:
		$$
		\begin{tikzcd}
			S^2(\widehat{T})^\Gamma \otimes F^* \simeq A^2(BT, K_3^M) \arrow[r,hook]\arrow[dr,hook]&\bar{H}_\mathrm{\acute{e}t}^5(BT, \Z(3))\arrow[d]\\
			&\left(S^2(\widehat{T})\otimes F_{sep}^*\right)^\Gamma \, ,
		\end{tikzcd}
		$$
		Theorem \ref{thminv4} implies that the following exact sequence computes the group of degree $4$ invariants:
		\begin{align*}
			0\to H^2(F,\widehat{T}\otimes  \mathbb{Q/Z}(2))/\left(\widehat{T} \otimes K^M_2(F_{sep})\right)^\Gamma &\to 	\mathrm{Inv}^4(T, \mathbb{Q/Z}(3))_{norm} \\
		 	&	\to\left(S^2(\widehat{T})\otimes  F_{sep}^*\right)^\Gamma/\left( S^2(\widehat{T})^\Gamma\otimes  F_{sep}^*\right) \to H^3(F,\widehat{T}\otimes \mathbb{Q/Z}(2)) \, .
		\end{align*}
		
		For degree $5$ invariants, assume for simplicity that, from now on, $F$ is of cohomological dimension $\leq1$. Then Theorem \ref{thminv4} can be simplified as follows:
		\[
		\begin{tikzcd}
			&  H^1(F, S^2(\widehat{T}) \otimes \Q/\Z(2)) \arrow[d, hook] & \\
			A^2(BT, K^M_4) \arrow[r,hook] & \bar{H}^6_\mathrm{\acute{e}t}(BT,\mathbb{Z}(4)) \arrow[r, two heads] \arrow[d, two heads] & \mathrm{Inv}^5(T, \mathbb{Q/Z}(4))_{norm} \\
			&  \left(S^2(\widehat{T})\otimes K^M_2(F_{sep})\right)^\Gamma &   \\
		\end{tikzcd}
		\]
		Therefore we obtain an exact sequence:
		\begin{align*}
		    0\to \ker\left(S^2(\widehat{T})^\Gamma \otimes K_2^M(F) \to S^2(\widehat{T}) \otimes K_2^M(F_{sep}) \right) & \to  H^1(F,S^2(\widehat{T})\otimes \mathbb{Q/Z}(2) ) \to \mathrm{Inv}^5(T, \mathbb{Q/Z}(4))_{norm} \\
		    & \to \left(S^2(\widehat{T})\otimes K^M_2(F_{sep})\right)^\Gamma/\left( S^2(\widehat{T})^\Gamma\otimes K^M_2(F)\right)\to 0 \, .
		\end{align*}
		
		Let $K/F$ be a splitting field of $T$ and $G$ be its Galois group.
		The assumption that $F$ has dimension $1$ provides an exact sequence for $H^1(F,S^2(\widehat{T})\otimes \mathbb{Q/Z}(2) )$ by Hochschild-Serre spectral sequence:
		\begin{align*}
		0\to  H^1(G,S^2(\widehat{T})\otimes \mathbb{Q/Z}(2)^{\Gamma_K} ) \to H^1(F,S^2(\widehat{T})\otimes \mathbb{Q/Z}(2) ) &\to H^1(K,S^2(\widehat{T})\otimes \mathbb{Q/Z}(2) )^G\\
		&\to H^2(G,S^2(\widehat{T})\otimes \mathbb{Q/Z}(2)^{\Gamma_K} )  \to 0.
		\end{align*}
		
		The proof of Lemma \ref{koszulcomplex} provides an exact sequence:
		$$
		0\to \Lambda^2\mathbb{Z}^n\to \mathbb{Z}^n\otimes \widehat{P}\to S^2(\widehat{P})\to S^2(\widehat{T})\to 0.
		$$
		Note that the composition $S^2(\widehat{P})\to \widehat{P}\otimes \widehat{P}\to S^2(\widehat{P})$ is multiplication by $2$, where the first homomorphism maps $\ x\cdot y$ to $x\otimes y+ y\otimes x$ and the second is the natural quotient.
		If we assume that $\widehat{P}$ is a direct sum of several copies of $\mathbb{Z}[G]$ (for instance, if $T$ is norm one torus) and $\vert G \vert$ is odd (or consider the $p$-part of each groups, $p\neq 2$), 
		then $H^i(G,S^2(\widehat{P})\otimes \mathbb{Q/Z}(2)^{\Gamma_K}  )$ is trivial for $i\ge 1$ because of the Shapiro's lemma and we get $ H^i(G,S^2(\widehat{T})\otimes \mathbb{Q/Z}(2)^{\Gamma_K}  )\simeq H^{i+2}(G,\mathbb{Q/Z}(2)^{\Gamma_K}  )^{\oplus {n(n-1)\over 2}}$ for $i\ge 1$.
		Therefore, 
		in this case, we obtain an exact sequence describing $H^1(F,S^2(\widehat{T})\otimes \mathbb{Q/Z}(2) )$:
		\begin{align*}
			0\to  H^3(G,\mathbb{Q/Z}(2)^{\Gamma_K} )^{\oplus {n(n-1)\over2}} \to H^1(F,S^2(\widehat{T})\otimes \mathbb{Q/Z}(2) ) &\to \left(S^2(\widehat{T})\otimes H^1(K, \mathbb{Q/Z}(2) )\right)^G\\
			&\to H^4(G,\mathbb{Q/Z}(2)^{\Gamma_K} )^{\oplus {n(n-1)\over2}}  \to 0.
		\end{align*}

	\bigskip
	{\bf Acknowledgements. }
	The second author acknowledges financial support from the China Scholarship Council (CSC) during his visit to IMJ-PRG, Sorbonne Université. He also thanks his advisor, Prof. Dasheng Wei, for his support.
	
	\bibliography{ref}

\end{document}